\declaretheorem[bodyfont = \normalfont,name=Notation]{Mnotation}
\declaretheorem[style=mystyle,name=Example]{Mexample}
\def\justl#1#2{\\
        &#1& \rule{2em}{0pt}  \{
        \mbox{\rule[-.7em]{0pt}{1.8em} \footnotesize{#2} \} } \\ && }
\def\redu#1#2{{#1} \hspace{-0.3em} \upharpoonright_{#2}}
\def\comp{\mathbin{\boldsymbol{\cdot}}}
\def\pv#1#2{\langle #1 \rangle #2}
\newcommand{\ie}{\emph{i.e.}}
\DeclareMathOperator{\suc}{suc}
\begin{document}

\title{Asymmetric Combination of Logics is Functorial: A Survey}

\titlerunning{Asymmetric Combination Of Logics Is
  Functorial} 

\author{Renato Neves  \inst{1}    \and
        Alexandre Madeira \inst{1} \and
        Luis S. Barbosa \inst{1}  \and
        Manuel A. Martins \inst{2}
}


\institute{
  INESC TEC (HASLab) \& Universidade do Minho, Portugal \\
  \email{nevrenato@di.uminho.pt, amadeira@inesctec.pt,
    lsb@di.uminho.pt} \and
  CIDMA -- Dep. of Mathematics, Universidade de Aveiro, Portugal \\
  \email{martins@ua.pt} }

\maketitle

\begin{abstract}
  Asymmetric combination of logics is a formal process that develops
  the characteristic features of a specific logic on top of another
  one. Typical examples include the development of temporal, hybrid,
  and probabilistic dimensions over a given base logic. These examples 
  are surveyed in the paper under a particular perspective --- that 
  this sort of combination of logics  possesses a functorial
  nature. Such a view gives rise to several interesting questions.
  They range from the problem of combining
  translations (between logics), to that of ensuring property
  preservation along the process, and the way different asymmetric
  combinations can be related through appropriate natural
  transformations.

\end{abstract}

\section{Introduction}
\label{intro}

\subsection{Motivation and Context}

It is well known that software's inherent high complexity renders
formal design and analysis a difficult challenge, still largely unmet
by the current engineering practices.  Often, in fact, the formal
specification of a non trivial software system calls for multiple
logics so that specific types of requirements and design issues can be
captured: if properties of data structures are typically encoded in
an equational framework, behavioural issues will call for some sort of
modal or temporal logic, whereas probabilistic reasoning will be
required in order to predict or analyse faulty behaviour in
distributed systems.

This fact explains the growing interest in the systematic combination
of logics, an area whose overall aim can be summed up in a simple
methodological principle: \emph{identify the different natures of the
  requirements to be formalised, and combine whatever logics are
  suitable to handle them into a single logic for the whole
  system}. Its potential was already stressed in the eighties by
J. Goguen and J. Meseguer, and the whole programme started to gain
prominence in the following decade (\emph{cf.}
\cite{blackburn97,equality_logicalprogramming}).

The current paper surveys a specific type of combination of logics, 
called \emph{asymmetric}, in which
the characteristic features of a logic are developed on top of
another one.
Probably the most famous example is the process of
\emph{temporalisation} \cite{finger}, in which the features of a
temporal logic are added to another logic; the latter is often referred
to as the \emph{base logic} in order to distinguish the original
machinery from the one added along the process. In brief,
temporalisation adds a temporal dimension to the models of a given
logic and syntactical machinery to suitably handle this added
dimension. The \emph{hybridisation} \cite{hybridisation} and
\emph{probabilisation} \cite{probabilization} processes are more
recent examples. The former develops a \emph{hybrid} logic
\cite{hybridlogics} on top of the base one whereas the latter adds
probabilistic features. Other examples include \emph{quantisation}
\cite{caleiro06} and \emph{modalisation} \cite{modal_diaconescu},
bringing into the picture features of quantum and modal logic,
respectively.

Is there a common characterisation of these different combinations,
able to provide a suitable setting to discuss their properties at a
generic level?  Such is the question addressed in this paper through
the identification of their common \emph{functorial nature}. This
perspective structures the whole survey presented here.

Our approach is based on the theory of \emph{institutions}
\cite{institutions}, an abstract characterisation of logical systems
that encompasses syntax, semantics, and satisfaction.  Put forward by
J. Goguen and R. Burstall in the late seventies, its original aim was
to develop as much Computing Science as possible in a general, uniform
way, independently of any particular logical system, in response to
the \emph{``population explosion among the logical systems used in
  Computing Science''} \cite{institutions}. Since then this goal has
been achieved to an extent even greater than originally thought.
Indeed, institutions underlie the foundations of algebraic
specification methods, and are most useful in handling and combining
different sorts of logical systems.  The universal character and
resilience of institutions is witnessed by the wide set of logics
formalised and subsequently explored within the framework.  Examples
go from standard classical logics, to more unconventional ones,
typically capturing modern specification and programming paradigms ---
examples include \emph{process algebras} \cite{fiadeiro07},
\emph{temporal logics} \cite{cengarle98}, the \textsc{Alloy} language
\cite{neves14}, coalgebraic logics \cite{modalinst}, functional and
imperative languages \cite{foundations_algspec}, among many others.

\subsection{Contributions and Roadmap}

Institutions are objects of a well known category $\mathbf{I}$ whose
arrows are the so-called \emph{institution comorphisms} (\emph{cf.}
\cite{mossa07, foundations_algspec}).  In this
setting we argue that an asymmetric combination of logics can, very
often, be seen as an \emph{endofunctor} over $\mathbf{I}$. Three
examples (\emph{temporalisation}, \emph{hybridisation}, and
\emph{probabilisation}) are discussed in detail, with their
definitions (slightly) reworked to fit in the general picture. Such a
functorial perspective has several advantages: an interesting one is
the possibility to lift the combination process from logics to their
translations, which allows for the characterisation of natural
transformations between asymmetric combinations. Another interesting
possibility is the study of adjoints, and preservation of properties
such as conservativity, equivalence, and (co)limits.

We initiate this survey with a brief overview of common approaches to
combination of logics, in Section \ref{sc:comb}. From there on, the
focus is placed on asymmetric combinations and the characterisation of
their functorial nature.

Thus, in Section \ref{sc:inst} we recall the category of institutions
$\mathbf{I}$ and revisit the three combinations of logics discussed in
the paper.  Then, in Section \ref{sc:fun}, these examples are made
functorial. For the sake of simplicity and conciseness, we define an
institutional notion of asymmetric combination and make, to a large
extent, the necessary proofs at this level of abstraction. We stress,
however, that the paper's main objective is not to introduce such a
notion, but rather to survey the functorial nature of a number of
asymmetric combinations and to show that the functorial perspective
paves the way to several interesting mechanisms and research lines.

In the same section we study property preservation by these three
(new) functors in what concerns conservativity (an important property
in the validation of specifications) and the equivalence of
institutions. We also discuss natural transformations between
asymmetric combinations.  Finally, in Section \ref{sc:con}, we conclude and
suggest future lines of research.

This paper assumes a basic knowledge of Category Theory. Whenever
found suitable, we will omit subscripts in natural transformations and
denote the underlying class of objects of a category $\mathbf{C}$ by
$|\mathbf{C}|$ or just $\mathbf{C}$.

\section{Combination of logics: A brief overview}\label{sc:comb}

The entry on \emph{Combining Logics} in the \emph{Stanford
  Encyclopedia of Philosophy} \cite{sep-logic-combining} stresses the
role of Computing Science applications as a main driving force for
research in obtaining new logical systems from old, integrating
features and preserving properties to a reasonable extent: \emph{``One
  of the main areas interested in the methods for combining logics is
  software specification. Certain techniques for combining logics were
  developed almost exclusively with the aim of applying them to this
  area.''}  The aforementioned hybridisation and temporalisation
methods, for example, were originally developed with concrete
applications to Computing Science in mind, but interestingly they can
be more broadly understood as a specific way of combining logics at a
model theoretical
level. 

As already mentioned, an asymmetric combination of logics
develops specific features of a logic `on top'  of another one. 
This sort of combination was generalised by C. Caleiro, A. Sernadas
and C. Sernadas in \cite{parametrisation}, in a method called
\emph{parameterisation}.  In brief, a logic is parametrised by another
one if the atomic part of the former is replaced by the latter: thus, the
method distinguishes a parameter to fill (the atomic part), a
parametrised logic (the `top' logic) and a parameter logic (the logic
inserted within). More recently, J. Rasga \emph{et al.}
\cite{importinglogics} proposed a method for importing logics
by exploiting a graph-theoretic approach.

From a wider perspective, combination of logics is increasingly
recognised as a relevant research domain, driven not only by
philosophical enquiry on the nature of logics or strict mathematical
questions, but also from applications in Computing Science and
Artificial Intelligence. The first methods appeared in the context of
modal logics.  This includes \emph{fusion} of the underlying languages
\cite{Thomason84}, pioneered by M. Fitting in a 1969 paper combining
alethic and deontic modalities \cite{fitting69}, and \emph{product of
  logics} \cite{productslogics}.  Both approaches can be characterised
as \emph{symmetric}. Product of logics, for example, amounts to
pairing the Kripke semantics, \ie\ the accessibility relations, of
both logics.  With a wider scope of application, \ie\ beyond modal
logics, \emph{fibring} \cite{fibringG} was originally proposed by
D. Gabbay, and contains fusion as a particular case. From a syntactic
point of view the language of the resulting logic is freely generated
from the signatures of the combined logics, symbols from both of them
appearing intertwined in an arbitrary way.

Reference \cite{caleirofibring05} offers an excellent roadmap for the  
several variants of fibring in the literature. A particularly relevant 
evolution was the work of A. Sernadas and his collaborators resorting 
to universal constructions from category theory to characterise 
different patterns of connective sharing, as documented in \cite{SSC99}. 
In the simplest case, where no constraint is imposed by sharing, fibring is 
the least extension of both logics over the coproduct of their signatures, 
which basically amounts to a coproduct of logics. This approach, usually 
referred to as \emph{algebraic fibring}, makes heavy use of categorial 
constructions as a source of genericity to provide more general and 
wide applicable methods. 

%

\section{Asymmetric combination of logics
  (institutionally)}\label{sc:inst}
\subsection{Institutions} 
Let us recall the core notions of the theory of institutions and
revisit the three working examples of combinations.

\begin{definition} 
    An institution $\mathcal{I}$ is a tuple $( Sign^{\mathcal{I}}$,
    $Sen^{\mathcal{I}}$, $Mod^{\mathcal{I}}$,
    $(\models_{\Sigma}^{\mathcal{I}})_{\Sigma \in |Sign^{\mathcal{I}}|})$ where
			
    \begin{itemize}
    \item
          $Sign^{\mathcal{I}}$ is a category whose objects are
          signatures and arrows signature morphisms.

    \item 
         $Sen^{\mathcal{I}}$ : $Sign^{\mathcal{I}}$ $\rightarrow$
         {$\mathbf{Set}$}, is a functor that for each signature $\Sigma \in
         |Sign^\mathcal{I}|$ returns a set of $\Sigma$-sentences,

    \item 

        $Mod^{\mathcal{I}}$ : $(Sign^{\mathcal{I}})^{op}$ $\rightarrow$
        {$\mathbf{Cat}$}, is a functor that for each signature $\Sigma \in
        |Sign^\mathcal{I}|$ returns a category whose objects are
        $\Sigma$-models and the arrows are $\Sigma$-model homomorphisms.

    \item 

        $\models_{\Sigma}^{\mathcal{I}}$ $\subseteq$
        $|Mod^{\mathcal{I}}(\Sigma)| \times Sen^{\mathcal{I}}(\Sigma)$, is a
        satisfaction relation such that for each signature morphism $\varphi$
        : $\Sigma \rightarrow \Sigma'$ the following property holds

        \begin{center}

       $Mod^{\mathcal{I}}(\varphi)(M)
       \models_{\Sigma}^{\mathcal{I}} \rho$ iff $M
       \models_{\Sigma'}^{\mathcal{I}} Sen^{\mathcal{I}}(\varphi)(\rho)$ 

        \end{center}

        \noindent
	for any $M \in |Mod^{\mathcal{I}}(\Sigma')|$, $\rho \in
        Sen^{\mathcal{I}}(\Sigma)$.  Diagrammatically,
        \[ \xymatrix{ \Sigma \ar[d]_{\varphi} & Mod^{\mathcal{I}}(\Sigma)
        \ar@{-}[rr]^{\models^{\mathcal{I}}_\Sigma} & &
        Sen^\mathcal{I}(\Sigma)\ar[d]^{Sen^\mathcal{I}(\varphi)}
        \\ \Sigma' &
        Mod^{\mathcal{I}}(\Sigma')\ar[u]^{Mod^\mathcal{I}(\varphi)}
        \ar@{-}[rr]_{\models^\mathcal{I}_{\Sigma'}}
        &&Sen^{\mathcal{I}}(\Sigma')\\ }
        \]

      \end{itemize}
      If the tuple does not necessarily respects the satisfaction
      condition above then we call it a \emph{pre-institution}.
\end{definition}

\begin{Mnotation}	
  In the sequel we will refer to $Mod^{\mathcal{I}}(\varphi)(M)$ as the
  $\varphi$-reduct of $M$ and denote it by $\redu{M}{\varphi}$.  When
  clear from the context, both the subscript and superscript in the
  satisfaction relation will be dropped.
\end{Mnotation}








    \begin{definition} 

      Consider two institutions $\mathcal{I},\mathcal{I}'$.  A
      \emph{comorphism} $(\Phi,\alpha,\beta): \mathcal{I}
      \rightarrow \mathcal{I}'$ is a triple such that

    \begin{itemize}

       \item 

       $\Phi$: $Sign^\mathcal{I} \rightarrow Sign^{\mathcal{I}'}$ is a
       functor,

       \item 

       $\alpha$: $Sen^\mathcal{I} \rightarrow Sen^{\mathcal{I}'} \comp
       \Phi$ is a natural transformation, 

       \item 

         $\beta$: $Mod^{\mathcal{I}'} \comp \Phi^{op} \rightarrow Mod^\mathcal{I}$
         is a natural transformation \footnote{$(\_)^{op}$ applied to
           a functor $F : \mathbf{C} \rightarrow \mathbf{D}$
           induces a functor
           $F^{op} : \mathbf{C}^{op} \rightarrow \mathbf{D}^{op}$
           such that for any object or arrow $a$ in $\mathbf{C}$,
           $F^{op}(a) = F(a)$.},

       \item

       and for any $\Sigma \in |Sign^\mathcal{I}|$, $M \in |Mod^{\mathcal{I}'}
      \comp \Phi^{op} \> (\Sigma) |$ and $\rho \in Sen^\mathcal{I}(\Sigma)$
    		\begin{center} 

                $\beta_{\Sigma}(M) \models^\mathcal{I}_{\Sigma} \rho$ iff $M
                \models^{\mathcal{I}'}_{\Phi(\Sigma)} \alpha_{\Sigma}(\rho)$

              \end{center}
      \noindent
      Diagrammatically, for each $\Sigma \in |Sign^{\mathcal{I}}|$
      \[ \xymatrix{ Mod^{\mathcal{I}}(\Sigma)
          \ar@{-}[rr]^{\models^{\mathcal{I}}_\Sigma} &
          &Sen^{\mathcal{I}}(\Sigma)\ar[d]^{\alpha_\Sigma} \\
          Mod^{\mathcal{I}'} \comp \Phi^{op}
          (\Sigma)\ar[u]^{\beta_\Sigma}\ar@{-}[rr]_{\models^{\mathcal{I}'}_{\Phi(\Sigma)}}
          &&Sen^{\mathcal{I}'} \comp \Phi (\Sigma)\\ }
      \]	
       \end{itemize}
\end{definition}

\begin{definition}

  Let us consider two comorphisms $(\Phi_1,\alpha_1,\beta_1) :
  \mathcal{I} \rightarrow \mathcal{I}'$, and
  $(\Phi_2,\alpha_2,\beta_2) : \mathcal{I}' \rightarrow
  \mathcal{I}''$. Their composition $(\Phi_2,\alpha_2,\beta_2) \> ; \>
  (\Phi_1,\alpha_1,\beta_1) : \mathcal{I} \rightarrow
  \mathcal{I}''$ is defined as $(\Phi_2,\alpha_2,\beta_2) \> ; \>
  (\Phi_1,\alpha_1,\beta_1) \triangleq (\Phi_2 \comp \Phi_1, \>
  (\alpha_2 \circ 1_{\Phi_1}) \comp \alpha_1, \> \beta_1 \comp
  (\beta_2 \circ 1_{\Phi_1^{op}}) )$
  where the white circle denotes the \emph{Godement (horizontal)}
  composition of natural transformations.
  Thus,
   \begin{flalign*}
       & \Phi_2 \comp \Phi_1 \> : \> Sign^\mathcal{I} \rightarrow
       Sign^{\mathcal{I}''}, \\ & (\alpha_2 \circ 1_{\Phi_1}) \comp \alpha_1 \>
       : \> Sen^\mathcal{I} \rightarrow Sen^{\mathcal{I}''} \comp \Phi_2 \comp
       \Phi_1, \\ & \beta_1 \comp (\beta_2 \circ 1_{\Phi_1^{op}}) \> : \> 
       Mod^{\mathcal{I}''} \comp \Phi_2^{op} \comp \Phi_1^{op} \rightarrow
       Mod^{\mathcal{I}}.
  \end{flalign*}

\end{definition}

\noindent
Each institution $\mathcal{I}$ has as the identity comorphism the
triple $(1_{Sign^\mathcal{I}}, 1_{Sen^\mathcal{I}},
1_{Mod^\mathcal{I}})$.

As mentioned in the Introduction, institutions and respective
comorphisms form a category $\mathbf{I}$.

\subsection{An institutional rendering of asymmetric combinations of
  logics}

Consider the following abstract characterisation of what is an
asymmetric combination of logics.  Start with arbitrary categories
$Sign_1$, $Sign_2$, and two functors
\begin{flalign*}
   M^{\mathcal{C}} : (Sign_1)^{op} \rightarrow \mathbf{Cat}, \> \>
   M^{\mathcal{I}} : (Sign_2)^{op} \rightarrow \mathbf{Cat}.
\end{flalign*}
Assume that, for each $\Delta \in |Sign_1|$, there is a functor
$U_{(M^{\mathcal{C}},\Delta)} : M^{\mathcal{C}} (\Delta) \rightarrow
\mathbf{Set}$.  Whenever no ambiguities arise, we will drop the
subscript of $U_{(M^{\mathcal{C}},\Delta)}$. Let us further assume
that given a morphism $\varphi : \Delta \rightarrow \Delta'$ of
$Sign_1$, the induced functor $M^{\mathcal{C}}(\varphi)$ makes the
following diagram commute.
\begin{equation*}
  \xymatrix{
    M^{\mathcal{C}}(\Delta') \ar[rr]^{M^{\mathcal{C}}(\varphi)} \ar[dr]_{U} 
    && M^{\mathcal{C}} (\Delta) \ar[dl]^{U} \\
    & \mathbf{Set}
   }
\end{equation*}

\noindent
This leads to a functor
$M^{\mathcal{C}} (M^{\mathcal{I}}) : (Sign_1 \times Sign_2)^{op}
\rightarrow \mathbf{Cat}$ such that given a pair
$(\Delta,\Sigma) \in Sign_1 \times Sign_2$,
$M^{\mathcal{C}}(M^{\mathcal{I}}) (\Delta,\Sigma)$ forms a
\emph{discrete} category whose objects are triples $(S, R, m)$ where
$R \in M^{\mathcal{C}}(\Delta)$, $U(R) = S$, and
$m : S \rightarrow M^{\mathcal{I}}(\Sigma)$. Moreover, given a
signature morphism
$\varphi_1 \times \varphi_2 : (\Sigma,\Delta) \rightarrow
(\Sigma',\Delta')$ we have
$M^{\mathcal{C}} (M^{\mathcal{I}}) (\varphi_1 \times \varphi_2) \> (S,
R, m) \triangleq (S, \> M^{\mathcal{C}} (\varphi_1) (R), \>
M^{\mathcal{I}} (\varphi_2) \comp m)$.

\begin{definition}
  An asymmetric combination $\mathcal{C}$ is a tuple
  $(Sign^{\mathcal{C}}, Sen^{\mathcal{C}}, M^{\mathcal{C}},
  \models^{\mathcal{C}})$ such that
  \begin{itemize}
  \item $Sign^{\mathcal{C}}$ is a category of signatures.
  \item $Sen^{\mathcal{C}}$ is a family of functions 
    \begin{flalign*}
      Sen^\mathcal{C}_{Sign} : (Sign \rightarrow \mathbf{Set}) \rightarrow
      (Sign^{\mathcal{C}} \times Sign \rightarrow \mathbf{Set})
    \end{flalign*}
  \noindent
  indexed by the categories $Sign$ in $\mathbf{Cat}$.
  \item $M^{\mathcal{C}}$ is a functor
  $M^{\mathcal{C}} : (Sign^{\mathcal{C}})^{op} \rightarrow
  \mathbf{Cat}$ as assumed above.
  \item Given functors
    $M^{\mathcal{I}} : Sign^{op} \rightarrow \mathbf{Cat}$,
    $Sen^\mathcal{I} : Sign \rightarrow \mathbf{Set}$,
    $\models^{\mathcal{C}}$
    is a family of relation liftings
    $(\models^{\mathcal{C}}_{( 
        \Delta, \Sigma)})_{(\Delta,\Sigma ) \> \in \> Sign^{\mathcal{C}} \times Sign}$
    \begin{center}
      $\models^{\mathcal{C}}_{(\Delta,\Sigma)} : |M^{\mathcal{I}} (\Sigma)| \times Sen^{\mathcal{I}}(\Sigma) 
      \rightarrow
      |M^{\mathcal{C}}(M^{\mathcal{I}}) \> (\Delta,\Sigma)| \times Sen^{\mathcal{C}} (Sen^\mathcal{I}) (\Delta,\Sigma)$
    \end{center}

 \end{itemize}

 \noindent
Given an institution $\mathcal{I}$, a pre-institution $\mathcal{C} \mathcal{I}$,
corresponding to a specific combination, is obtained as follows.

   \begin{itemize}
   \item  $Sign^{\mathcal{C}\mathcal{I}} \triangleq Sign^\mathcal{C} \times Sign^\mathcal{I}$.
   \item
     $Sen^{\mathcal{C}\mathcal{I}} \triangleq Sen^{\mathcal{C}}
     (Sen^\mathcal{I})$. We will assume that the sentences given by
     $Sen^{\mathcal{C}\mathcal{I}}$ are inductively defined (\ie\ are
     generated by a grammar) so that we can define recursive maps on
     them. Intuitively, their atoms include the sentences of the base
     logic.
   \item $Mod^{\mathcal{C}\mathcal{I}} \triangleq M^{\mathcal{C}}(M^{\mathcal{I}})$.
   \item Given a signature
       $(\Delta,\Sigma) \in |Sign^{\mathcal{C}\mathcal{I}}|$, 
       $\models^{\mathcal{C}\mathcal{I}}_{(\Delta,\Sigma)} \triangleq
       {\models^\mathcal{C}_{( \Delta, \Sigma ) } (\models^\mathcal{I}_{\Sigma})}$.
   \end{itemize}
\end{definition}

\subsubsection{Temporalisation.}

We are now ready to recast the three aforementioned combinations of
logics in the institutional setting. We start with temporalisation
since it is the simplest of the three.

\begin{definition} 

  Given an institution $\mathcal{I}$ the temporalisation process
  returns a pre-institution
  $\mathcal{L} \mathcal{I} = (Sign^{\mathcal{L}\mathcal{I}},
  Sen^{\mathcal{L}\mathcal{I}}, Mod^{\mathcal{L}\mathcal{I}},
  \models^{\mathcal{L}\mathcal{I}})$ defined as

    \begin{itemize}
    \item 

      \textsc{Signatures}. $Sign^{\mathcal{L}\mathcal{I}} \triangleq Sign^{\mathcal{L}}
      \times Sign^\mathcal{I}$, where $Sign^\mathcal{L}$ is the one object
      category $1$. Since $Sign^{\mathcal{L}\mathcal{I}} \cong Sign^{\mathcal{I}}$, no
      distinction will be made, unless stated otherwise, between the
      two signature categories.

    \item 

      \textsc{Sentences}. Given a signature $\Sigma \in$
      $|Sign^{\mathcal{L}\mathcal{I}}|$,
      $Sen^{\mathcal{L}\mathcal{I}}(\Sigma)$ is the smallest set generated by
      grammar
      \begin{flalign*}
        \rho \ni \> \psi \> | \> \neg \rho \> | \> \rho
        \wedge \rho \> | \> X \rho \> | \> \rho \> U \rho
      \end{flalign*}        
      \noindent where $\psi \in Sen^\mathcal{I}(\Sigma)$.  For a signature
      morphism $\varphi : \Sigma \rightarrow \Sigma'$,
      $Sen^{\mathcal{L}\mathcal{I}}(\varphi)$ is a function that, provided a
      sentence $\rho \in Sen^{\mathcal{L}\mathcal{I}}(\Sigma)$, replaces the
      base sentences $\psi$ $($\emph{i.e.} elements of
      $Sen^{\mathcal{I}}(\Sigma))$ occurring in $\rho$ by
      $Sen^{\mathcal{I}}(\varphi)(\psi)$; in symbols
       $Sen^{\mathcal{L}\mathcal{I}}(\varphi)(\rho) = \rho [ 
       \psi \in Sen^{\mathcal{I}}(\Sigma) \> / \>
       Sen^{\mathcal{I}}(\varphi)(\psi) \> ] $
       $($recall that sentences are assumed to be inductively defined$)$.

 \item \textsc{Models}. Given the object $\star \in |1|$,
   $M^{\mathcal{L}}(\star)$ is the category whose $($unique$)$ element
   is the pair $(\mathbb{N}, \suc : \mathbb{N}
   \rightarrow \mathbb{N})$ $(\mathbb{N}$ denotes the set of natural
   numbers$)$ and $U \> (\mathbb{N},
   \suc : \mathbb{N} \rightarrow \mathbb{N})$ is
   $\mathbb{N}$. Hence, the elements of category
   $Mod^{\mathcal{L}\mathcal{I}}(\Sigma)$ are triples
     $(\mathbb{N},
     \suc : \mathbb{N} \rightarrow \mathbb{N}, m)$ $($often denoted by letter $M)$
   where $m : \mathbb{N} \rightarrow | Mod^\mathcal{I} (\Sigma) |$.
   We will often denote $m \> (n)$ by $M_n$.
   \item \textsc{Satisfaction}. 
     Given a signature $\Sigma \in |Sign^{\mathcal{L}\mathcal{I}}|$, $M \in
     |Mod^{\mathcal{L}\mathcal{I}}(\Sigma)|$, $\rho \in
     Sen^{\mathcal{L}\mathcal{I}}(\Sigma)$, $M \models \rho$ iff $M \models^0
     \rho$ where

     \smallskip
    \begin{tabular}{l c l} 
        $M \models^j \psi$ & iff & $M_j \models \psi$
        for $\psi \in Sen^\mathcal{I}(\Sigma)$ \\ $M \models^j \rho \wedge
        \rho'$ & iff & $M \models^j \rho$ and $M \models^j \rho'$ \\ $M
        \models^j \neg \rho$ & iff & $M \not \models^j \rho$ \\ $M \models^j X
        \rho$ & iff & $M \models^{j+1} \rho$ \\ $M \models^j \rho \> U \>
        \rho'$ & iff & for some $k \geq j$, $M \models^k \rho'$ and for all $j
        \leq i < k, \> M \models^i \rho$
    \end{tabular}
       
   \end{itemize}

\end{definition}

\noindent
Note that \emph{temporalised} propositional logic coincides with the classic
\emph{linear temporal logic} (\emph{cf.} \cite{finger}).

\begin{theorem}
  \label{sattemp}
  Temporalised $\mathcal{I}$
  (\emph{i.e.} $\mathcal{L}\mathcal{I}$) is an institution.
\end{theorem}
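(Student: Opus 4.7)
My plan is to prove the satisfaction condition by structural induction on sentences, after strengthening the statement to cover all time points uniformly. Concretely, I would fix a signature morphism $\varphi : \Sigma \rightarrow \Sigma'$ and a model $M = (\mathbb{N}, \suc, m) \in |Mod^{\mathcal{L}\mathcal{I}}(\Sigma')|$ and prove, for every $j \in \mathbb{N}$ and every $\rho \in Sen^{\mathcal{L}\mathcal{I}}(\Sigma)$, the equivalence
\begin{equation*}
  \redu{M}{\varphi} \models^j \rho \iff M \models^j Sen^{\mathcal{L}\mathcal{I}}(\varphi)(\rho).
\end{equation*}
The theorem then follows by instantiating $j = 0$. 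The quantification over $j$ is essential because the clauses for $X$ and $U$ shift the time index, so the induction hypothesis must be strong enough to be reused at different indices.

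Before starting the induction, I would unfold the definition of the reduct functor: from the construction of $M^{\mathcal{C}}(M^{\mathcal{I}})$ on morphisms we have $\redu{M}{\varphi} = (\mathbb{N}, \suc, Mod^{\mathcal{I}}(\varphi) \comp m)$, so $(\redu{M}{\varphi})_j = \redu{M_j}{\varphi}$ for each $j$. This is the bridge that connects temporalised reducts to base-institution reducts.

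For the induction itself, the base case $\rho = \psi \in Sen^{\mathcal{I}}(\Sigma)$ reduces to the satisfaction condition of $\mathcal{I}$: $\redu{M}{\varphi} \models^j \psi$ iff $\redu{M_j}{\varphi} \models^{\mathcal{I}} \psi$ iff $M_j \models^{\mathcal{I}} Sen^{\mathcal{I}}(\varphi)(\psi)$ iff $M \models^j Sen^{\mathcal{L}\mathcal{I}}(\varphi)(\psi)$, using also that $Sen^{\mathcal{L}\mathcal{I}}(\varphi)$ acts as $Sen^{\mathcal{I}}(\varphi)$ on atoms. The Boolean cases $\neg \rho$ and $\rho \wedge \rho'$ are immediate from the induction hypothesis at the same index $j$. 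The case $X\rho$ uses the hypothesis at $j+1$, and the case $\rho \, U \, \rho'$ uses the hypothesis at each index $i$ with $j \leq i \leq k$ and at $k$ itself; here the commutation of $Sen^{\mathcal{L}\mathcal{I}}(\varphi)$ with the temporal connectives is what makes the translated sentence land in the right syntactic shape.

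The main (minor) obstacle is conceptual rather than technical: one must be careful to formulate the induction over \emph{all} $j$ simultaneously so that the $X$ and $U$ clauses can invoke the hypothesis at shifted indices; once this is done, every step is a routine unfolding of the semantics together with one application of the base institution's satisfaction condition in the atomic case.
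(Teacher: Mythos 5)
Your proposal is correct and follows essentially the same route as the paper's proof: a structural induction on sentences of the $j$-indexed equivalence $\redu{M}{\varphi} \models^j \rho \iff M \models^j Sen^{\mathcal{L}\mathcal{I}}(\varphi)(\rho)$, with the atomic case discharged by the base institution's satisfaction condition via the identity $(\redu{M}{\varphi})_j = \redu{M_j}{\varphi}$. Your explicit remark that the induction must be over all $j$ simultaneously (so the $X$ and $U$ clauses can reuse the hypothesis at shifted indices) is exactly the point the paper leaves implicit when it declares the remaining cases straightforward.
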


\begin{proof}
  In appendix.
\end{proof}

\noindent
In the sequel we show that the other two asymmetric combinations
enjoy the same property, which is essential for their characterisation
as endofunctors. Of course, this also entails the possibility of
combining a logic an arbitrary number of times, using any of these
three processes.

\subsubsection{Probabilisation.}

In order to handle probabilistic systems (\emph{e.g.}  \emph{Markov
chains}) probabilisation \cite{probabilization} adds a probabilistic
dimension to logics. In institutional terms,

\begin{definition} 
  Consider an arbitrary institution $\mathcal{I}$. Its probabilised
  version $\mathcal{P}\mathcal{I} = (Sign^{\mathcal{P}\mathcal{I}},$
  $Sen^{\mathcal{P}\mathcal{I}}, Mod^{\mathcal{P}\mathcal{I}},
  \models^{\mathcal{P}\mathcal{I}})$ is defined as follows

\begin{itemize}

   \item 
   \textsc{Signatures.} $Sign^{\mathcal{P}\mathcal{I}} \triangleq 
   Sign^{\mathcal{P}} \times
   Sign^\mathcal{I}$, where $Sign^\mathcal{P}$ is the one object category
   $1$. Since $Sign^{\mathcal{P}\mathcal{I}} \cong Sign^{\mathcal{I}}$, 
   no distinction will be made, unless stated otherwise, between the two signature
   categories.
   \item 
   \textsc{Sentences.} For a signature $\Sigma \in |Sign^{\mathcal{P}\mathcal{I}}|$,
   $Sen^{\mathcal{P}\mathcal{I}}(\Sigma)$ is the smallest set generated by grammar
   \vspace{-0.3cm}
   \begin{flalign*}
     \rho \ni \> t < t \> | \> \neg \rho \> | \> \rho \wedge \rho
   \end{flalign*}
   \noindent
   for $t \in \textsc{T}(\Sigma)$ $(\textsc{T} :
   Sign^{\mathcal{P}\mathcal{I}} \rightarrow
   \mathbf{Set})$. $\textsc{T}(\Sigma)$ is generated by grammar
   \begin{center}
   $t \ni \> r \> | \> \int \psi \> | \> t + t \> | \> t \> . \> t$
   \end{center}

   \noindent
   where $r \in \mathbb{R}$ is a real number,
   and $\psi \in Sen^\mathcal{I}(\Sigma)$. Also, we have
   \begin{flalign*}
        &Sen^{\mathcal{P}\mathcal{I}}(\varphi)(\rho) \triangleq \rho [ t \in
        \textsc{T}(\Sigma) \> / \> \textsc{T}(\varphi)(t)\> ], \> \mbox{where}
        \\ & \textsc{T}(\varphi)(t) \triangleq 
        t[ \> \psi \in Sen^\mathcal{I}(\Sigma) \> / \>
        Sen^{\mathcal{I}}(\varphi)(\psi) \> ]
   \end{flalign*}

   \item \textsc{Models.} $Mod^{\mathcal{P}}(\star)$ is the 
     discrete category whose elements are probability spaces
     $(S, p : {2^S} \rightarrow [0,1])$. Functor $U$ returns
      the carrier set.
     Hence, models in $Mod^{\mathcal{P}\mathcal{I}}(\Sigma)$ are triples
     $(S, p, m)$ where $m : S \rightarrow Mod^\mathcal{I}(\Sigma)$.
     For each sentence $\psi \in Sen^\mathcal{I}(\Sigma)$ we set
      $m^{-1} [\psi] \triangleq \{ s \in S : m(s)
      \models \psi \}$.

   \item 
    \textsc{Satisfaction.} Finally, given a signature $\Sigma \in
    |Sign^{\mathcal{P}\mathcal{I}}|$, a model $M \in
    |Mod^{\mathcal{P}\mathcal{I}}(\Sigma)|$, and $\rho \in
    Sen^{\mathcal{P}\mathcal{I}}(\Sigma)$, define
    
    \vspace{-0.2cm}
    \begin{multicols}{2}
      \begin{tabular}{ l c l }

       $M_r$ & = & $r$ \\
       $M_{(\int \psi)}$ & = & $p(m^{-1}[\psi])$ \\
       $M_{(t + t')} $ & =& $M_t + M_{t'}$ \\
       $M_{(t.t')} $ & =& $M_t \> . \> M_{t'}$

     \end{tabular}

     \columnbreak

     \begin{tabular}{ l c l }
       & & \\
       $M \models t < t'$ & iff & $M_t < M_{t'}$
       \\
       $M \models \neg \rho$ & iff & $M \not \models \rho$
       \\
       $M \models \rho \wedge \rho'$ & iff &
       $M  \models \rho$ and $M \models \rho'$

     \end{tabular}

   \end{multicols}
 \end{itemize}

\end{definition} 

\begin{theorem}
  \label{satprob}
  Probabilised $\mathcal{I}$
  $($\emph{i.e.} $\mathcal{P}\mathcal{I})$ is an institution.
\end{theorem}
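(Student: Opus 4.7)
The plan is to verify the satisfaction condition, since functoriality of $Sen^{\mathcal{P}\mathcal{I}}$ and $Mod^{\mathcal{P}\mathcal{I}}$ is immediate from functoriality of $Sen^{\mathcal{I}}$ and $Mod^{\mathcal{I}}$ together with the recursive definition of substitution. Because $Sign^{\mathcal{P}}$ is trivial, a signature morphism $\varphi : (\star,\Sigma) \to (\star,\Sigma')$ reduces to a morphism in $Sign^{\mathcal{I}}$, and the reduct of $M = (S,p,m) \in |Mod^{\mathcal{P}\mathcal{I}}(\Sigma')|$ is $\redu{M}{\varphi} = (S, p,\, Mod^{\mathcal{I}}(\varphi) \comp m)$. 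The goal is therefore
\[
\redu{M}{\varphi} \models^{\mathcal{P}\mathcal{I}}_{\Sigma} \rho \iff M \models^{\mathcal{P}\mathcal{I}}_{\Sigma'} Sen^{\mathcal{P}\mathcal{I}}(\varphi)(\rho).
\]

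First I would prove an auxiliary term lemma: for every $t \in \textsc{T}(\Sigma)$, the real value satisfies $(\redu{M}{\varphi})_t = M_{\textsc{T}(\varphi)(t)}$. The proof is by structural induction on $t$. The constant case $t = r$ is immediate, and the arithmetic cases $t + t'$ and $t \comp t'$ follow mechanically from the induction hypothesis since the arithmetic operations are defined componentwise. The critical case is $t = \int \psi$: here one must check that
\[
(Mod^{\mathcal{I}}(\varphi) \comp m)^{-1}[\psi] = m^{-1}[\,Sen^{\mathcal{I}}(\varphi)(\psi)\,],
\]
which is exactly the satisfaction condition of the base institution $\mathcal{I}$ applied pointwise to each $s \in S$. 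Since both probability spaces share the same underlying set $S$, sigma-algebra $2^S$, and measure $p$, applying $p$ to both sides yields $(\redu{M}{\varphi})_{\int \psi} = M_{\int Sen^{\mathcal{I}}(\varphi)(\psi)} = M_{\textsc{T}(\varphi)(\int \psi)}$, as required.

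With the term lemma in hand, I would complete the proof by induction on $\rho \in Sen^{\mathcal{P}\mathcal{I}}(\Sigma)$. The atomic case $\rho \equiv t < t'$ is discharged by
\[
\redu{M}{\varphi} \models t < t' \iff (\redu{M}{\varphi})_t < (\redu{M}{\varphi})_{t'} \iff M_{\textsc{T}(\varphi)(t)} < M_{\textsc{T}(\varphi)(t')} \iff M \models Sen^{\mathcal{P}\mathcal{I}}(\varphi)(t<t'),
\]
using the term lemma in the middle step and the definition of $Sen^{\mathcal{P}\mathcal{I}}(\varphi)$ on atomic sentences at the end. The Boolean cases $\neg \rho$ and $\rho \wedge \rho'$ are routine since $Sen^{\mathcal{P}\mathcal{I}}(\varphi)$ commutes with the connectives by definition and the satisfaction clauses are purely propositional.

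The only genuine obstacle is the $\int \psi$ case of the term lemma, which is where the satisfaction condition of $\mathcal{I}$ is actually used; everything else is bookkeeping. Measurability is not an issue because probability spaces here are taken over the full powerset $2^S$, so every preimage $m^{-1}[\psi]$ automatically lies in the domain of $p$.
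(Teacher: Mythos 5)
Your proof is correct and follows essentially the same route as the paper: the term lemma you formulate is precisely the paper's Lemma~\ref{terms}, with the base institution's satisfaction condition invoked in the $\int \psi$ case, and the satisfaction condition for sentences is then discharged by the same case split (atomic via the term lemma, Boolean connectives by routine induction). Your added remark on measurability being trivial over $2^S$ is a sensible clarification but not a deviation.
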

 
\begin{proof}
  We just need to show that the satisfaction condition holds,
  which follows by a simple case-by-case observation.
\begin{enumerate}[(a)]
   \item The strictly less case is a direct consequence of Lemma
     \ref{terms} in Appendix.
   \item The negation and implication cases follow by induction
     on the structure of sentences.
\end{enumerate}
\end{proof}

\begin{Mexample}
\textsc{Probabilised propositional logic} ($\mathcal{P}PL$).
The probabilisation of propositional logic
is the following logic:
\begin{itemize}

\item \textsc{Signatures}. Signatures are sets of propositional symbols $P$.
\item \textsc{Sentences}. Sentences are generated by grammar 
  $\rho \ni
  \> t < t \> | \> \neg \rho \> | \> \rho \wedge \rho$ where $t$ is a
  term generated by grammar $t \ni \> r \> | \> \int \psi \> | \> t +
  t \> | \> t \> . \> t$ for $r \in \mathbb{R}$ and $\psi$ a
  propositional sentence.

\item \textsc{Models}. 
Models are probability spaces equipped with a function whose domain is
the set of outcomes and the codomain the universe of propositional models.
\end{itemize}
\end{Mexample}

\noindent
Intuitively, $\mathcal{P}\textsc{PL}$ offers a probabilistic
`flavour' to propositions. For instance, one may say that the
probability of $p$ holding is less than probability of $q$ holding,
$\int p < \int q$.  Other examples of probabilised logics are
discussed in \cite{probabilization}.

\subsubsection{Hybridisation.}

Hybridisation \cite{hybridisation} (and its variations \emph{e.g.}
\cite{Gaina17}) provides the foundations for handling different kinds
of \emph{reconfigurable systems} (\emph{i.e.}  computational systems
that change their execution modes throughout their lifetime) in a
systematic manner: in brief, the hybrid machinery relates and
pinpoints the different execution modes while the base logic specifies
the properties that are supposed to hold in each particular mode.

Since hybridisation was originally defined in institutional terms we
will just recall here its definition but without nominal
quantification, which yields an asymmetric fragment of the process.
Such a fragment is adopted in \cite{hybridisation} to define
parametrised translations from hybridised institutions into
first-order logic --- the authors of \cite{Dia16} extended this work
to accommodate nominal quantification as well. The same fragment is
the one adopted in \cite{Mad15} to provide a general characterisation
of equivalence and refinement for hybridised logics.

\begin{definition}
Given an institution $\mathcal{I}$,
${\mathcal{H}\mathcal{I}} = 
(Sign^{\mathcal{H}\mathcal{I}},Sen^{\mathcal{H}\mathcal{I}},Mod^{\mathcal{H}\mathcal{I}}, 
\models^{\mathcal{H}\mathcal{I}})$
is defined as

\begin{itemize}
\item \textsc{Signatures.}
  $Sign^{\mathcal{H}\mathcal{I}} \triangleq Sign^\mathcal{H} \times
  Sign^\mathcal{I}$, where $Sign^\mathcal{H}$ is the category
  $\mathbf{Set} \times \mathbf{Set}$ whose objects are pairs of sets
  $(Nom,\Lambda)$. $Nom$ denotes a set of nominal symbols, and
  $\Lambda$ a set of modality symbols.  \vspace{0.1cm}
        \item \textsc{Sentences.} For a signature $(\Delta,\Sigma) \in
          |Sign^{\mathcal{H}\mathcal{I}}|$ $($with $\Delta = (Nom,\Lambda))$,
          $Sen^{\mathcal{H}\mathcal{I}} (\Delta,\Sigma)$ is the smallest set generated by
          grammar

          \begin{center}
           $\rho \ni \> i \> | \> \psi \> | \> \neg \rho \> | \> \rho
           \wedge \rho \> | \> @_i \rho \> | \> \pv{\lambda} \rho$
          \end{center}

          \noindent where $i \in Nom$, $\psi \in Sen^\mathcal{I}(\Sigma)$, $\lambda
          \in \Lambda$. For a signature morphism $\varphi_1 \times \varphi_2 : 
          (\Delta,\Sigma) \rightarrow (\Delta', \Sigma')$,
          nominals, modalities, and base sentences of 
          $\rho \in Sen^{\mathcal{HI}} (\Delta,\Sigma)$ are replaced according
          to $\varphi_1 \times \varphi_2$ by $Sen^{\mathcal{HI}}(\varphi_1 \times \varphi_2)$.

          \vspace{0.1cm}
        \item \textsc{Models.} Given a signature
          $\Delta \in |Sign^{\mathcal{H}}|$, $M^{\mathcal{H}}(\Delta)$
          is the discrete category whose elements are triples
          $(S,(R_i)_{i \in Nom}, (R_\lambda)_{\lambda \in \Lambda})$
          such that $R_i \in S$, and $R_\lambda \subseteq S \times
          S$. Functor $U$ forgets the last two elements, keeping just
          the carrier set.  For any signature morphism
          $(\varphi_1,\varphi_2) : (Nom,\Lambda) \rightarrow (Nom',
          \Lambda')$, we have
          $M^{\mathcal{H}}(\varphi_1,\varphi_2) (S,(R'_i)_{i \in Nom'},
          (R'_\lambda)_{\lambda \in \Lambda'}) \triangleq (S,(R_i)_{i
            \in Nom}, (R_\lambda)_{\lambda \in \Lambda})$, where
      \begin{flalign*}
        R_i = R'_{\varphi_1(i)} \text{ and }
        R_\lambda = R'_{\varphi_2(\lambda)}
      \end{flalign*}

      \vspace{0.1cm}
       \item 
       \textsc{Satisfaction.} Given $(\Delta,\Sigma) \in
       |Sign^{\mathcal{H}\mathcal{I}}|$, a model $M \in
       |Mod^{\mathcal{H}\mathcal{I}} (\Delta,\Sigma)|$ and a sentence $\rho \in
       Sen^{\mathcal{H}\mathcal{I}}(\Sigma)$, the satisfaction relation is defined as
         \begin{center}
           $M \models \rho$ iff $M \models^w \rho$ for all $w \in S$
         \end{center}
         where 
         \smallskip

         \begin{tabular}{l c l}
           $M \models^w i$ & iff & $R_i = w$ for $i \in Nom$ \\
           $M \models^w \psi$ & iff & $m(w) \models \psi$ for $\psi \in
           Sen^\mathcal{I}(\Sigma)$ \\
           $M \models^w \neg \rho$ & iff & $M \not \models^w \rho$ \\
           $M \models^w \rho \wedge \rho'$ & iff & $M \models^w \rho$
           and $M \models^w \rho'$ \\
           $M \models^w @_i \rho$ & iff & $M \models^{R_i} \rho$ \\
           $M \models^w \pv{\lambda} \rho$ & iff & there is some $w' \in W$
           such that $(w,w') \in R_\lambda$ and $M \models^{w'} \rho$ \\
         \end{tabular}

\end{itemize}
  
\end{definition}

\noindent
The proof that, for any institution $\mathcal{I}$, hybridisation yields another
institution is given in reference \cite{hybridisation}.

\begin{Mexample}

  \textsc{Hybridised propositional logic ($\mathcal{H}PL$)}. 
  Hybridisation of propositional logic returns the following logic.

  \begin{itemize}

  \item \textsc{Signatures}. Signatures are triples
  $(Nom,\Lambda,P)$ where $Nom$ is a set of nominal symbols,
  $\Lambda$ a set of modality symbols, and $P$ a set of propositional
  symbols.
  \item \textsc{Sentences}. Sentences are generated by grammar
  \begin{center}
    $\rho \ni \> i \> | \> \psi \> | \> \neg \rho \> | \> \rho \wedge
    \rho \> | \> @_i \rho \> | \> \pv{\lambda} \rho$ 
\end{center}
   \noindent
   where $i$
   is a nominal, $\lambda$
   is a modality, and $\psi$
   a propositional sentence.  Note that we have two levels of Boolean
   connectives: the ones from propositional logic, and the ones
   introduced by hybridisation. One can, however, `collapse' them
   since they semantically coincide.
 \item 
   \textsc{Models}. Models are triples $(W,R,m)$ such that $W$
   defines the set of worlds, $R$ describes the transitions between
   worlds and names states. Moreover each world $w \in W$ points to a
   propositional model $m(w)$.

   \end{itemize}

\end{Mexample}

\section{Asymmetric combinations of logics as functors}\label{sc:fun}

\subsection{Lifting comorphisms}

In the previous section three combinations of logics were revisited under the light of the theory
of institutions. We intend now to discuss them as translations between
logics. We will do this at the level of the abstract definition of
a combination of logics given above, leading thus to more powerful
results, applicable not only to the three combinations discussed, but
also to any other fitting the characterisation.

Formally, given a comorphism
$(\Phi,\alpha,\beta) : \mathcal{I} \rightarrow \mathcal{I}'$ a
combination process maps $(\Phi,\alpha,\beta)$ into
$\mathcal{C} (\Phi,\alpha,\beta) : \mathcal{CI} \rightarrow
\mathcal{CI}'$.  The strategy for such a lifting is simple:
when transforming signatures, sentences or models, we keep the top
level structure and change the bottom level according to the base
comorphism. Thus,

\begin{definition}\label{df:morlift}
  A comorphism $(\Phi,\alpha,\beta) : \mathcal{I}
  \rightarrow \mathcal{I}'$ is lifted to a mapping
  $(\mathcal{C} \Phi, \mathcal{C} \alpha, \mathcal{C}\beta) : \mathcal{CI}
  \rightarrow \mathcal{CI}'$ as follows:
\begin{itemize}

  \item 

    \textsc{Signatures}.  $\mathcal{C} \Phi : Sign^{\mathcal{C}\mathcal{I}}
    \rightarrow Sign^{\mathcal{C}\mathcal{I}'}$,
    \begin{flalign*}
      \mathcal{C} \Phi \triangleq
      1_{Sign^{\mathcal{C}}} \times \Phi.
    \end{flalign*}
      
   \item

    \textsc{Sentences}. 
    $\mathcal{C} \alpha : Sen^{\mathcal{C} \mathcal{I}}
      \rightarrow Sen^{\mathcal{C} \mathcal{I}'} \comp \mathcal{C} \Phi$, 
      \begin{flalign*}
        (\mathcal{C} \alpha)_{(\Delta, \Sigma )}(\rho) \triangleq
        \rho \> [ \> \psi \in Sen^\mathcal{I}(\Sigma) \> / \> \alpha_\Sigma 
        (\psi)\> ],
      \end{flalign*}
      \noindent  
      for any $(\Delta,\Sigma) \in |Sign^{\mathcal{C}\mathcal{I}}|$.
 
    \vspace{0.2cm}
    \item
     
     \textsc{Models}. 
     $\mathcal{C} \beta : Mod^{\mathcal{C} \mathcal{I}'} \comp \mathcal{C}
       \Phi^{op} \rightarrow Mod^{\mathcal{C} \mathcal{I}}$, 
       \begin{flalign*}
         (\mathcal{C}
       \beta)_{( \Delta, \Sigma ) } \triangleq id \times id \times
       (\beta_\Sigma \comp),
       \end{flalign*}
       \noindent
       for any $(\Delta,\Sigma) \in |Sign^{\mathcal{C}\mathcal{I}}|$.
\end{itemize}

\end{definition}

\noindent
Clearly, $\mathcal{C} \Phi$ is a functor and both
$\mathcal{C} \alpha$, and $\mathcal{C} \beta$ are natural
transformations.




\begin{lemma}
  \label{compres}
  The lifting process, as defined above, preserves identities and
  distributes over composition.
\end{lemma}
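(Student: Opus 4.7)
The plan is to verify both claims componentwise, taking advantage of the fact that the lifting and the composition of comorphisms are both defined separately on the signature, sentence, and model components of a triple. Since the text has already observed that $\mathcal{C}\Phi$ is a functor and $\mathcal{C}\alpha, \mathcal{C}\beta$ are natural transformations, it remains only to check identities and composition.

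For preservation of identities, I would unfold Definition \ref{df:morlift} applied to $(1_{Sign^{\mathcal{I}}},1_{Sen^{\mathcal{I}}},1_{Mod^{\mathcal{I}}})$. The signature component reduces to $1_{Sign^{\mathcal{C}}} \times 1_{Sign^{\mathcal{I}}} = 1_{Sign^{\mathcal{C}\mathcal{I}}}$; the sentence component at $(\Delta,\Sigma)$ is the substitution $\rho\,[\psi / (1_{Sen^{\mathcal{I}}})_{\Sigma}(\psi)] = \rho\,[\psi/\psi] = \rho$; and the model component is $id \times id \times ((1_{Mod^{\mathcal{I}}})_{\Sigma}\comp -) = id$. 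So the lifted triple is $(1_{Sign^{\mathcal{C}\mathcal{I}}}, 1_{Sen^{\mathcal{C}\mathcal{I}}}, 1_{Mod^{\mathcal{C}\mathcal{I}}})$, as required.

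For distribution over composition, fix $(\Phi_1,\alpha_1,\beta_1): \mathcal{I} \rightarrow \mathcal{I}'$ and $(\Phi_2,\alpha_2,\beta_2): \mathcal{I}' \rightarrow \mathcal{I}''$ and compare $\mathcal{C}\bigl((\Phi_2,\alpha_2,\beta_2)\,;\,(\Phi_1,\alpha_1,\beta_1)\bigr)$ with $\mathcal{C}(\Phi_2,\alpha_2,\beta_2)\,;\,\mathcal{C}(\Phi_1,\alpha_1,\beta_1)$. The signature case is $1_{Sign^{\mathcal{C}}} \times (\Phi_2 \comp \Phi_1) = (1_{Sign^{\mathcal{C}}} \times \Phi_2) \comp (1_{Sign^{\mathcal{C}}} \times \Phi_1)$, immediate from functoriality of the product. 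The model case, at $(\Delta,\Sigma)$, reduces on both sides to $id \times id \times \bigl((\beta_1)_{\Sigma} \comp (\beta_2)_{\Phi_1(\Sigma)} \comp -\bigr)$ once the whiskering $\mathcal{C}\beta_2 \circ 1_{\mathcal{C}\Phi_1^{op}}$ at $(\Delta,\Sigma)$ is unfolded as $(\mathcal{C}\beta_2)_{(\Delta,\Phi_1(\Sigma))}$, and then associativity of functional composition finishes the job.

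The sentence case is the only one with actual content. After unfolding the Godement composition, the LHS at $(\Delta,\Sigma)$ is the single substitution $\rho\bigl[\,\psi \,/\, (\alpha_2)_{\Phi_1(\Sigma)}\bigl((\alpha_1)_{\Sigma}(\psi)\bigr)\,\bigr]$, whereas the RHS is the iterated substitution $\bigl(\rho\bigl[\,\psi\,/\,(\alpha_1)_{\Sigma}(\psi)\,\bigr]\bigr)\bigl[\,\psi'\,/\,(\alpha_2)_{\Phi_1(\Sigma)}(\psi')\,\bigr]$. Equality reduces to a general substitution lemma: for any maps $f : Sen^{\mathcal{I}}(\Sigma) \rightarrow Sen^{\mathcal{I}'}(\Phi_1(\Sigma))$ and $g : Sen^{\mathcal{I}'}(\Phi_1(\Sigma)) \rightarrow Sen^{\mathcal{I}''}(\Phi_2(\Phi_1(\Sigma)))$, one has $(\rho\,[\psi/f(\psi)])\,[\psi'/g(\psi')] = \rho\,[\psi/g(f(\psi))]$. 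This is proved by a routine induction on the inductive structure of $\rho$ (whose existence is guaranteed by the standing assumption in the definition of asymmetric combination): at a base atom both sides equal $g(f(\psi))$, and at each combinator both sides propagate identically by structural recursion. The sole real obstacle is the bookkeeping of the Godement whiskering in each component, which is why I would set up each side's explicit action at $(\Delta,\Sigma)$ before collapsing the expressions.
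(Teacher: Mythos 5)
Your proposal is correct and follows essentially the same route as the paper's own proof: a componentwise unfolding of Definition~\ref{df:morlift} for identities, and for composition the same chain of equalities through the Godement whiskering, with the sentence case reduced to the fact that an iterated substitution equals the substitution along the composite. The only difference is that you isolate and prove by structural induction the substitution lemma that the paper's calculation silently invokes in its ``composition'' step, which is a welcome (but not divergent) extra degree of rigour.
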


\begin{proof}
  In appendix.
\end{proof}

\noindent
To conclude that the three combinations are endofunctors one step
still remains: to show that the lifted arrows are comorphisms. This,
however, entails the need to inspect each specific combination on its
own, as they all lift the satisfaction relation in different
ways. Certainly a fully generic definition would be an interesting
result. However, this turned out to be a surprisingly complex issue,
which furthermore is not essential for the message that we want this
paper to convey.

\begin{theorem}
  \label{combcom}
  If $(\Phi,\alpha,\beta)$ is a comorphism then,
  for any of the three combinations $\mathcal{C}$ discussed above,
  $\mathcal{C}(\Phi,\alpha,\beta)$ is a comorphism as well.
\end{theorem}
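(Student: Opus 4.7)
The plan is to exploit the fact that Definition~\ref{df:morlift} already delivers a functor $\mathcal{C}\Phi$ together with natural transformations $\mathcal{C}\alpha$ and $\mathcal{C}\beta$, so the only remaining obligation is the satisfaction condition
\begin{center}
$(\mathcal{C}\beta)_{(\Delta,\Sigma)}(M) \models^{\mathcal{C}\mathcal{I}}_{(\Delta,\Sigma)} \rho$ iff $M \models^{\mathcal{C}\mathcal{I}'}_{\mathcal{C}\Phi(\Delta,\Sigma)} (\mathcal{C}\alpha)_{(\Delta,\Sigma)}(\rho).$
\end{center}
Since each of the three combinations defines its own satisfaction relation, I would treat them separately, in each case by structural induction on $\rho$, with the base case delegated to the satisfaction condition of $(\Phi,\alpha,\beta)$.

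One observation substantially reduces the bookkeeping in all three cases: the lifted model $(\mathcal{C}\beta)_{(\Delta,\Sigma)}(M)$ is obtained from $M$ by leaving the combination-level structure (the $\suc$-chain of indices in the temporalised case, the worlds together with the $R_i$ and $R_\lambda$ in the hybrid case, or the probability space in the probabilised case) untouched, and only post-composing the valuation $m$ with $\beta_\Sigma$. Hence for any index, world, or outcome $x$, the base model attached to $x$ in $(\mathcal{C}\beta)(M)$ is exactly $\beta_\Sigma(m(x))$, so the satisfaction condition for $(\Phi,\alpha,\beta)$ gives at once, for every base sentence $\psi$, that the pointwise base-level satisfaction matches after translation by $\alpha_\Sigma$.

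For temporalisation I would strengthen the induction statement to ``for every $j \in \mathbb{N}$, $(\mathcal{L}\beta)(M) \models^j \rho$ iff $M \models^j (\mathcal{L}\alpha)(\rho)$'', and for hybridisation to an analogous statement indexed by $w \in S$. The nominal atoms $i$ are handled trivially because $\mathcal{C}\beta$ preserves the $R_i$; the base-sentence atoms are handled by the observation above; and the Boolean, next, until, $@_i$, and $\langle \lambda \rangle$ cases all reduce immediately by the inductive hypothesis, as these connectives only relocate the index or world parameter, or combine sub-formulas, without touching the base valuation.

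The main obstacle is probabilisation, because there the base sentences are not atoms of the top-level grammar but appear buried inside real-valued terms under the $\int$ constructor. I would first prove an auxiliary lemma by induction on terms: for every $t \in \textsc{T}(\Sigma)$, $\bigl((\mathcal{P}\beta)(M)\bigr)_t = M_{(\mathcal{P}\alpha)(t)}$. The cases $r$, $t+t'$, and $t \cdot t'$ are immediate; the only interesting sub-case is $t = \int \psi$, which reduces to the set equality $\bigl((\mathcal{P}\beta)(M)\bigr)^{-1}[\psi] = M^{-1}[\alpha_\Sigma(\psi)]$, itself a direct consequence of the satisfaction condition on $(\Phi,\alpha,\beta)$ applied outcome by outcome. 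This is essentially Lemma~\ref{terms} cited in the proof of Theorem~\ref{satprob}. With the term lemma in hand, the atomic case $t < t'$ of the outer induction on $\rho$ is immediate and the negation and conjunction cases are routine, completing the proof.
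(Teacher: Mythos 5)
Your proposal is correct and follows essentially the same route as the paper's appendix proof: a case analysis over the three combinations, structural induction on sentences with the satisfaction relation strengthened to its indexed form ($\models^j$ for temporalisation, $\models^w$ for hybridisation), the base case discharged by the satisfaction condition of $(\Phi,\alpha,\beta)$, and for probabilisation an auxiliary term-level lemma $\bigl((\mathcal{P}\beta)(M)\bigr)_t = M_{(\mathcal{P}\alpha)(t)}$ whose $\int\psi$ case uses the comorphism condition outcome by outcome --- exactly the paper's Lemma~\ref{terms2}. No gaps to report.
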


\begin{proof}
  In appendix.
\end{proof}

\subsection{Property preservation (conservativity and equivalence)}

The characterisation of asymmetric combinations as endofunctors over
the category of institutions $\mathbf{I}$ provides a sound basis for
the study of property preservation by them.
 Such a study is illustrated in this section in which it
is shown that temporalisation, probabilisation, and hybridisation
preserve conservativity and equivalence. We start with the former
case.

In Computing Science a main reason to study under what conditions a
logic may be translated into another is to seek for the existence of
(better) computational proof support.  In the institutional setting,
suitable translations are often defined by comorphisms, which in many
cases should obey the following condition: whenever completeness is
required, \emph{i.e.} whenever one demands the validation of the
specification against all possible scenarios (models), then the
comorphisms involved must be conservative. Formally,

\begin{definition}
  A comorphism $(\Phi,\alpha,\beta)$ is conservative
  whenever, for each signature $\Sigma \in |Sign^\mathcal{I}|$,
  $\beta_\Sigma$ is surjective on objects.

\end{definition}

\noindent
Let us describe in more detail the relevance of conservativity for
validation. Recall the satisfaction condition placed upon comorphisms.
For a signature $\Sigma \in |Sign^\mathcal{I}|$,
$M \in |Mod^{\mathcal{I}'} \comp \Phi^{op} (\Sigma) |$, and
$\rho \in Sen^\mathcal{I}(\Sigma)$ we have
$\beta_{\Sigma}(M) \models^{\mathcal{I}}_{\Sigma} \rho$ iff
$M \models^{\mathcal{I}'}_{\Phi(\Sigma)} \alpha_{\Sigma}(\rho)$.
Graphically, for each $\Sigma \in |Sign^\mathcal{I}|$

      \[ \xymatrix{ Mod^{\mathcal{I}}(\Sigma) \ar@{-}[rr]^{\models^\mathcal{I}_\Sigma}
      & &Sen^{\mathcal{I}}(\Sigma)\ar[d]^{\alpha_\Sigma} \\ Mod^{\mathcal{I}'}
      \comp \Phi^{op}
      (\Sigma)\ar[u]^{\beta_\Sigma}\ar@{-}[rr]_{\models^{\mathcal{I}'}_{\Phi(\Sigma)}}
      &&Sen^{\mathcal{I}'} \comp \Phi (\Sigma)\\ }
      \]

\noindent
Suppose we want to verify that a sentence
$\rho \in Sen^\mathcal{I}(\Sigma)$ is satisfied by all models
$M \in |Mod^\mathcal{I}(\Sigma)|$. For this we resort to the
comorphism by translating the sentence (through $\alpha$) into the
target logic. The satisfaction condition, once verified, ensures that
if the sentence is satisfied by all models there, then all models in the
image of $\beta_\Sigma$ will satisfy the original sentence. Of course,
if $\beta_\Sigma$ is surjective on objects its image will coincide
with $|Mod^\mathcal{I}(\Sigma)|$, thus proving that the original
sentence is satisfied by all models in $|Mod^\mathcal{I}(\Sigma)|$.

\begin{theorem}
  A lifted conservative comorphism is still conservative.
\end{theorem}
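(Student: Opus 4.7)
The plan is to unpack the definition of $\mathcal{C}\beta$ from Definition \ref{df:morlift} and verify surjectivity on objects by a pointwise preimage construction, invoking the axiom of choice once. Because the lifted model translation acts only on the bottom-level valuation by postcomposition with $\beta_\Sigma$, while leaving the top-level structure intact, a single uniform argument will cover all three combinations at the abstract level introduced in Section \ref{sc:inst}, with no case analysis across temporalisation, probabilisation and hybridisation.

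Concretely, I would fix an arbitrary signature $(\Delta,\Sigma) \in |Sign^{\mathcal{C}\mathcal{I}}|$ and an arbitrary target model $(S,R,m) \in |Mod^{\mathcal{C}\mathcal{I}}(\Delta,\Sigma)|$, where $R \in M^{\mathcal{C}}(\Delta)$, $U(R) = S$, and $m : S \rightarrow |Mod^{\mathcal{I}}(\Sigma)|$. Recalling that $(\mathcal{C}\beta)_{(\Delta,\Sigma)} \triangleq id \times id \times (\beta_\Sigma \comp {-})$, producing a preimage amounts to exhibiting a function $m' : S \rightarrow |Mod^{\mathcal{I}'}(\Phi(\Sigma))|$ such that $\beta_\Sigma \comp m' = m$.

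Here the hypothesis of conservativity is used in exactly the expected way: for every $s \in S$ the fibre $\beta_\Sigma^{-1}(\{m(s)\})$ is nonempty because $\beta_\Sigma$ is surjective on objects, so a choice function yields the desired $m'$. The triple $(S,R,m')$ is then a bona fide object of $M^{\mathcal{C}}(M^{\mathcal{I}'})(\Delta,\Phi(\Sigma)) = Mod^{\mathcal{C}\mathcal{I}'}(\mathcal{C}\Phi(\Delta,\Sigma))$ since the first two components, and in particular the equation $U(R)=S$, are preserved by the lift; and by construction $(\mathcal{C}\beta)_{(\Delta,\Sigma)}(S,R,m') = (S,R,\beta_\Sigma \comp m') = (S,R,m)$.

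Since no obstacle of substance arises beyond the mild appeal to choice, I expect the only point worth stressing in the write-up to be the explicit observation that the argument is genuinely generic: it relies solely on the shape $id \times id \times (\beta_\Sigma \comp {-})$ of $\mathcal{C}\beta$ and on the fact that the top-level data $(S,R)$ is transported unchanged between $Mod^{\mathcal{C}\mathcal{I}}$ and $Mod^{\mathcal{C}\mathcal{I}'}$, so the result holds for \emph{any} combination fitting the abstract characterisation of Section \ref{sc:inst}, not just the three worked examples.
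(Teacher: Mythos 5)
Your argument is correct and is essentially the paper's own proof: both reduce surjectivity of $(\mathcal{C}\beta)_{(\Delta,\Sigma)} = id \times id \times (\beta_\Sigma \comp {-})$ to factoring each valuation $m : S \rightarrow |Mod^{\mathcal{I}}(\Sigma)|$ through $\beta_\Sigma$, which surjectivity of $\beta_\Sigma$ on objects guarantees. You are merely a bit more explicit than the paper about the pointwise choice of preimages and about the top-level data $(S,R)$ being carried along unchanged, which is a welcome clarification rather than a departure.
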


\begin{proof}
  Consider a conservative comorphism $(\Phi,\alpha,\beta)
  : \mathcal{I} \rightarrow \mathcal{I}'$. We want to prove that for
  any signature $(\Delta,\Sigma) \in |Sign^{\mathcal{C}\textsc{I}}|$
  $(\mathcal{C} \beta)_{( \Delta,\Sigma )} = id \times id \times
  (\beta_\Sigma \comp)$ is surjective on objects.  Since identities
  are surjective we just need to show that each $f \in |
  Mod^{\mathcal{I}} (\Sigma) |^S$ has a function $g \in
  |Mod^{\mathcal{I}'} \comp\Phi^{op} (\Sigma)|^S$ such that $f =
  \beta_\Sigma \comp g$.  Clearly, the condition for this to hold
  is that $img(f) \subseteq img(\beta_\Sigma)$, but the only way to
  ensure it is to have $img(\beta_\Sigma) = |Mod^{\mathcal{I}}
  (\Sigma)|$. In other words, $\beta_\Sigma$ must be surjective on
  objects, which is given by the assumption.
\end{proof}

\noindent
Next we show that the application of temporalisation, probabilisation,
and hybridisation to two equivalent logics yields again two equivalent
logics. 
First, recall the definition of equivalence of categories.

\begin{definition} 
  Two categories $\mathbf{C,D}$ are equivalent if there are two
  functors
  $F: \mathbf{C} \to \mathbf{D}, G : \mathbf{D} \to \mathbf{C}$ and
  two natural isomorphisms $\epsilon : FG \rightarrow 1_{\mathbf{D}}$,
  $\eta : 1_{\mathbf{C}} \rightarrow GF$. In these circumstances, an
  equivalence of categories, $G$ (resp. $F$) is the inverse up to
  isomorphism of $F$ (resp. $G$)
\end{definition}


\begin{definition}

  A comorphism $(\Phi,\alpha,\beta)$ is an equivalence of institutions
  if the following conditions hold.

  \begin{itemize}
  \item \textsc{Signatures}.
    $\Phi$ forms an equivalence of categories.
  \item  \textsc{Sentences}. 
    $\alpha$ has an inverse  up to semantical equivalence, \ie a
    natural transformation 
    $\alpha^{-1} : Sen^{\mathcal{I}'} \comp \Phi \rightarrow
    Sen^{\mathcal{I}}$ such that
    for any sentence $\rho \in Sen^\mathcal{I}(\Sigma)$,
    \begin{center}
        $(\alpha^{-1} \comp \alpha) (\rho)
        \models \rho$, $ \> \> \> \> \rho \models
        (\alpha^{-1} \comp \alpha) (\rho)$
    \end{center}

    \vspace{0.1cm}
    or more concisely, $(\alpha^{-1} \comp \alpha) (\rho)
    \models \hspace{-0.4em}| \>\rho$.

    Moreover, for any sentence $\rho \in Sen^{\mathcal{I}'} \comp \Phi
    (\Sigma)$, $(\alpha \comp \alpha^{-1}) (\rho) \models
    \hspace{-0.4em}| \>\rho$.

  \item \textsc{Models}.  $\beta$ has an inverse up to isomorphism,
    \ie, a natural transformation $\beta^{-1}$ such that for any
    $\Sigma \in |Sign^\mathcal{I}|$, functor $\beta_\Sigma^{-1}$ is
    the inverse up to isomorphism of $\beta_\Sigma$.
  \end{itemize}
  
\end{definition}

\noindent
More about equivalence of institutions can be found in \emph{e.g.}
document \cite{mossa07}.

\begin{theorem}
  A lifted equivalence of institutions is still an equivalence of
  institutions.
\end{theorem}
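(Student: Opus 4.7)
The plan is to verify the three conditions of equivalence for the lifted comorphism $(\mathcal{C}\Phi, \mathcal{C}\alpha, \mathcal{C}\beta)$, given that $(\Phi, \alpha, \beta)$ is an equivalence with witnesses $(\Phi^{-1}, \alpha^{-1}, \beta^{-1})$. The overall strategy mirrors the construction of the lifting itself: keep the top-level (combination-specific) data fixed and propagate the inverse data through the bottom component, then reduce each condition to the base equivalence together with a routine structural induction on combined sentences.

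The signature condition is immediate. Since $\mathcal{C}\Phi = 1_{Sign^\mathcal{C}} \times \Phi$, taking $(\mathcal{C}\Phi)^{-1} \triangleq 1_{Sign^\mathcal{C}} \times \Phi^{-1}$ yields the required natural isomorphisms componentwise, by pairing the identity transformation on $Sign^\mathcal{C}$ with the unit and counit of the $\Phi \dashv \Phi^{-1}$ equivalence. For the model condition, I would define $(\mathcal{C}\beta)^{-1}_{(\Delta,\Sigma)} \triangleq id \times id \times (\beta^{-1}_\Sigma \comp -)$, acting on a combined model $(S, R, m)$ by post-composing $m : S \to |Mod^{\mathcal{I}}(\Sigma)|$ with $\beta^{-1}_\Sigma$. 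Naturality in $(\Delta, \Sigma)$ follows from naturality of $\beta^{-1}$, and the pointwise natural isomorphisms $\beta_\Sigma \comp \beta^{-1}_\Sigma \cong 1$, $\beta^{-1}_\Sigma \comp \beta_\Sigma \cong 1$ lift to isomorphisms between the composites of $\mathcal{C}\beta$ and its candidate inverse.

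For the sentence condition, I would define $(\mathcal{C}\alpha)^{-1}_{(\Delta,\Sigma)}(\rho) \triangleq \rho[\psi \in Sen^{\mathcal{I}'} \comp \Phi(\Sigma) \,/\, \alpha^{-1}_\Sigma(\psi)]$, by structural recursion on the inductively generated combined sentences (the three grammars fix exactly what needs to be recursed through). The proof that $((\mathcal{C}\alpha)^{-1} \comp \mathcal{C}\alpha)(\rho)$ and $\rho$ are semantically equivalent proceeds by induction on $\rho$. In the atomic case $\rho = \psi$ for $\psi \in Sen^{\mathcal{I}}(\Sigma)$ this reduces to the hypothesis $(\alpha^{-1} \comp \alpha)(\psi) \models\!| \psi$, combined with the observation that combined satisfaction at any point --- time step for $\mathcal{L}\mathcal{I}$, world for $\mathcal{H}\mathcal{I}$, or outcome for $\mathcal{P}\mathcal{I}$ --- reduces to base satisfaction at the $m$-image of that point. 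The other atoms (nominals $i$ for hybridisation, and the order atom $t < t'$ for probabilisation) involve no base sentence to rewrite and are handled trivially, while the inductive cases for the top-level connectives and modalities propagate the equivalence transparently.

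The subtle point, and likely the main obstacle, is probabilisation, where base sentences appear buried inside integral terms $\int \psi$ rather than directly as sentences. There I would prove the auxiliary observation that $M_{\int \psi}$ depends on $\psi$ only through the set $m^{-1}[\psi]$, so that whenever $\psi \models\!| \psi'$ at the base level one has $m(s) \models \psi \iff m(s) \models \psi'$ pointwise, hence $m^{-1}[\psi] = m^{-1}[\psi']$ and therefore $p(m^{-1}[\psi]) = p(m^{-1}[\psi'])$. Once this lemma is isolated, the semantic equivalence passes through $+$, $\,.\,$, $<$, $\neg$, and $\wedge$ by direct use of the induction hypothesis, and a symmetric argument handles $(\mathcal{C}\alpha \comp (\mathcal{C}\alpha)^{-1})(\rho) \models\!| \rho$ for sentences in $Sen^{\mathcal{C}\mathcal{I}'} \comp \mathcal{C}\Phi(\Delta,\Sigma)$, completing the verification.
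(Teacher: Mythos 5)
Your proof is correct and follows essentially the same route as the paper's: lift the inverse data componentwise, taking $(\mathcal{C}\Phi)^{-1} = 1_{Sign^{\mathcal{C}}} \times \Phi^{-1}$, $(\mathcal{C}\alpha)^{-1} = \mathcal{C}(\alpha^{-1})$, and $(\mathcal{C}\beta)^{-1} = id \times id \times (\beta^{-1}_\Sigma \comp)$, and reduce each condition to the base equivalence. The only difference is that you make explicit the substitution-congruence step (that replacing base sentences by semantically equivalent ones preserves equivalence of combined sentences, including the $\int\psi$ case via $m^{-1}[\psi]=m^{-1}[\psi']$), which the paper compresses into the phrase ``this boils down to''.
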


\begin{proof}
  Suppose that $(\Phi,\alpha,\beta)$ is an institution equivalence. Then,

\begin{itemize}
    \item \textsc{Signatures}.  Since $\Phi$ is an equivalence of
      categories, $\mathcal{C} \Phi = 1_{Sign^\mathcal{C}} \times \Phi$
      must be as well.

  \medskip
\item \textsc{Sentences}. Let $(\mathcal{C} \alpha)^{-1}$ be the
  natural transformation $\mathcal{C} (\alpha^{-1})$. Then, to show that
  for any $\rho \in Sen^{\mathcal{CI}} (\Delta,\Sigma)$, property
  $\big ( (\mathcal{C} \alpha)^{-1} \comp \mathcal{C}\alpha \big )
  (\rho) \models \hspace{-0.4em}| \>\rho$ holds is, by definition of
  $\mathcal{C} \alpha$, equivalent to showing that
    \begin{center}
      $ \rho [ \psi \in Sen^\mathcal{I}(\Sigma) \> / \> (\alpha^{-1} \comp
      \alpha) (\psi) \> ] \models \hspace{-0.4em}| \>\rho$
    \end{center}

    \noindent
    This boils down to proving that $(\alpha^{-1} \comp
    \alpha) (\psi) \models \hspace{-0.5em}| \> \psi$, for any $\psi
    \in Sign^\mathcal{I}(\Sigma)$ which is given by the assumption.

    The proof that $\big ( \mathcal{C} \alpha \comp
    (\mathcal{C}\alpha)^{-1} \big )
    (\rho) \models \hspace{-0.4em}| \>\rho$ is analogous.

  \item \textsc{Models}. Finally, we need to show that for any
    $(\Delta,\Sigma) \in |Sign^{\mathcal{C}I}|$,
    $(\mathcal{C}\beta)_{(\Delta,\Sigma)}$ has an
    inverse up to isomorphism. For this
    we lift $\beta^{-1}_\Sigma$ (given by the assumption) into
    $(\mathcal{C} \beta)^{-1}_{({\Delta,\Sigma} )} =
    (id \times id \times \beta^{-1}_\Sigma \comp)$. Since $\beta^{-1}_\Sigma$
    is an inverse up to isomorphism of $\beta_\Sigma$ it is clear that
    $(\mathcal{C}\beta)^{-1}_{({\Delta,\Sigma} )}$ is also an
    inverse up to isomorphism of  
    $(\mathcal{C}\beta)_{({\Delta,\Sigma})}$.

\end{itemize}
\end{proof}

\subsection{Natural transformations}

We consider now natural transformations between asymmetric
combinations of logics, which seem to fit nicely into the picture:
while lifted comorphisms map the bottom level and keep the top one, such
natural transformations map the top and keep the bottom. For example,
take a natural transformation $\tau : \mathcal{L} \rightarrow
\mathcal{H}$. It is clear that each institution $\mathcal{I}$, induces
a comorphism $\tau_{\mathcal{I}} : \mathcal{LI}
\rightarrow \mathcal{HI}$.  Furthermore, naturality expresses
the commutativity of the diagram below
\begin{displaymath}
\centerline{
\xymatrix{
\mathcal{LI} \ar[d]_{\tau_{\mathcal{I}}}  
\ar[rr]^{\mathcal{L} (\Phi,\alpha,\beta ) }
    & & 
\mathcal{LI}' \ar[d]^{\tau_{\mathcal{I}'}} 
    \\
\mathcal{HI}  
\ar[rr]_{\mathcal{H} (\Phi,\alpha,\beta) }
    & &
\mathcal{HI}'
}
}
\end{displaymath}

\noindent
for each comorphism $(\Phi,\alpha,\beta)$.  This means that when
translating a logic whose levels are both mapped by a composition of
natural transformations and lifted comorphisms, it does not matter
which one of the top or bottom levels is taken first.

Let us illustrate this construction through the natural transformation
$\tau : \mathcal{L} \rightarrow \mathcal{H}$, which relates
temporalisation to hybridisation.  We will, for now, disregard the
$until$ ($U$) constructor associated with $\mathcal{L}$, in order to
keep the construction simple. First consider a signature
$N \in |Sign^{\mathcal{H}}|$ such that
$N \triangleq (\{ Init \},\{ After, After^\star, Next \})$.  Then for
any signature $(N,\Sigma) \in |Sign^{\mathcal{H}I}|$ define the full
subcategory of $Mod^{\mathcal{HI}} (N,\Sigma)$ (denoted in the sequel
by $M^{\mathcal{NI}} (N,\Sigma)$) whose objects are triples $(S,R,m)$
subjected to the following rules:
\begin{multicols}{2}
  \begin{tabular}{l }
    $S = \mathbb{N}$ \\
    $R_{Init} = 0$ \\
    $(a,b) \in R_{Next}$ iff $b = \suc (a)$
  \end{tabular}

  \columnbreak
  \begin{tabular}{l }
    \\
      $(a,b) \in R_{After}$ iff $a < b$ \\
     $(a,b) \in R_{After^\star}$ iff $a \leq b$.
  \end{tabular}
\end{multicols}





\begin{definition}
  Given an institution $\mathcal{I}$, define an arrow
  $\tau_\mathcal{I} = (\tau_\mathcal{I} \Phi, \tau_\mathcal{I} \alpha
  , \tau_\mathcal{I} \beta)$
  where

   \begin{itemize}
     \item
       \textsc{Signatures}. $\tau \Phi :
       Sign^{\mathcal{LI}} \rightarrow
       Sign^{\mathcal{HI}} \> $ is a functor such that
       $\tau \Phi \> (\Sigma) \triangleq (N, \Sigma)$ and, for any
       signature morphism $\varphi: \Sigma \rightarrow \Sigma'$

       \begin{center}
         $\tau \Phi \> (\varphi): (N, \> \Sigma)  \rightarrow
         (N, \> \Sigma'), \> \> \>
         \tau \Phi \> (\varphi) \triangleq id \times \varphi$
       \end{center}
       
     \item \textsc{Sentences}. Given a signature $\Sigma \in
       |Sign^{\mathcal{LI}}|$, $\tau \alpha :
       Sen^{\mathcal{LI}}(\Sigma) \rightarrow Sen^{\mathcal{HI}}
       \comp \tau \Phi (\Sigma) \> $ is a function such that
       $\tau \alpha (\rho)
       \triangleq @_{Init} \sigma (\rho)$ where

       \smallskip
       \begin{multicols}{2}
         \begin{tabular}{l c l}
           $\sigma (\psi)$ & = & $\psi, \mbox{ for }
                                 \psi \in Sen^{\mathcal{I}}(\Sigma) $\\
           $\sigma (\neg \rho)$ & =  & $\neg \sigma (\rho)$ \\
         \end{tabular}

         \columnbreak
         \begin{tabular}{l c l}
           $\sigma (\rho \wedge \rho')$ & =  & $\sigma (\rho) \wedge \sigma (\rho')$ \\
           $\sigma ( X \rho)$ & =  & $[Next] \; \sigma (\rho)$ \\
         \end{tabular}
       \end{multicols}
       

       \noindent
       The proof that $\tau \alpha$ is a natural transformation follows
       through routine calculation.




   \medskip
   \item
     Finally, given a signature $\Sigma \in |Sign^{\mathcal{LI}}|$, arrow
     $\tau \beta : Mod^{\mathcal{HI}} \comp (\tau \Phi)^{op}
     \rightarrow Mod^{\mathcal{LI}}$ is a functor such that
     \begin{center}
       $\tau \beta \> (S,R,m)  \triangleq (\mathbb{N}, \suc : \mathbb{N} \rightarrow
       \mathbb{N}, m)$
     \end{center}

     \noindent
     \medskip
     Clearly, $\tau \beta$ is a natural transformation.
   \end{itemize}
\end{definition}

\begin{theorem}
  \label{natLH}
  $\tau : \mathcal{L} \rightarrow \mathcal{H}$ forms a natural
  transformation whenever $Mod^{\mathcal{HI}}$ $($for any institution
  $\mathcal{I})$ is equal to $ Mod^{\mathcal{NI}}$.
\end{theorem}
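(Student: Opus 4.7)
The plan splits into two obligations: (i) verify that for each $\mathcal{I}$ the triple $\tau_{\mathcal{I}} = (\tau\Phi, \tau\alpha, \tau\beta)$ is a bona fide comorphism $\mathcal{LI} \to \mathcal{HI}$, and (ii) check that the family $\{\tau_{\mathcal{I}}\}_{\mathcal{I}}$ is natural, \ie\ that for every comorphism $(\Phi,\alpha,\beta) : \mathcal{I} \to \mathcal{I}'$ the square displayed in the discussion preceding the definition commutes. Functoriality of $\tau\Phi$ and naturality of $\tau\alpha$, $\tau\beta$ are already claimed at the defining stage, so the only substantive part of (i) is the satisfaction condition.

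For (i), fix $(N,\Sigma)$ and a model $M = (S, R, m) \in |Mod^{\mathcal{HI}}(N,\Sigma)|$. The hypothesis $Mod^{\mathcal{HI}} = Mod^{\mathcal{NI}}$ forces $S = \mathbb{N}$, $R_{Init} = 0$, and $R_{Next}$ to be the graph of $\suc$ (with the remaining accessibility relations fixed to the strict and non-strict orders). I would then prove by induction on $\rho \in Sen^{\mathcal{LI}}(\Sigma)$ the strengthened equivalence
\[
  (\mathbb{N}, \suc, m) \models^{j} \rho \iff M \models^{j} \sigma(\rho) \qquad \text{for every } j \in \mathbb{N},
\]
and instantiate it at $j = 0 = R_{Init}$ to recover the satisfaction condition: the outer $@_{Init}$ in $\tau\alpha(\rho)$ forces evaluation at world $0$ independently of the starting world $w$, and $\tau\beta(M) \models^{\mathcal{LI}} \rho$ unfolds to $(\mathbb{N},\suc,m) \models^{0} \rho$. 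The base, negation, and conjunction cases are routine. The $X$ case reduces, by the inductive hypothesis, to the observation that $M \models^{j} [Next]\,\sigma(\rho')$ collapses to $M \models^{j+1} \sigma(\rho')$ because $R_{Next}$ is a total function.

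For (ii), I would verify the three components of the naturality square separately. The signature square is immediate: both compositions send $\Sigma$ to $(N, \Phi(\Sigma))$, since $\mathcal{L}\Phi$ and $\mathcal{H}\Phi$ each act as the identity on the top-level component while $\tau\Phi$ inserts the fixed signature $N$. The model square is equally direct: starting from $(S,R,m) \in |Mod^{\mathcal{HI}'}(N, \Phi\Sigma)|$, both routes yield $(\mathbb{N},\suc,\beta_\Sigma \comp m)$, since $\mathcal{L}\beta$ and $\mathcal{H}\beta$ are both $id \times id \times (\beta_\Sigma \comp)$ and $\tau\beta$ simply discards the top structure in favour of the canonical one. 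The sentence square requires a short induction on $\rho$ showing that the two orders of substitution commute: applying $\mathcal{L}\alpha$ then $\tau\alpha$, or $\tau\alpha$ then $\mathcal{H}\alpha$, both produce $@_{Init}$ applied to $\sigma(\rho)$ with every base sentence $\psi$ replaced by $\alpha_\Sigma(\psi)$, because $\sigma$ is defined by recursion on hybrid/temporal connectives and acts as the identity on base sentences.

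The main obstacle I expect is the $X$ case of the satisfaction proof, namely justifying that $\sigma(X\rho') = [Next]\,\sigma(\rho')$ semantically captures a one-step temporal shift. This hinges on $R_{Next}$ being the graph of a total function, so that $[Next]\phi$ and $\pv{Next}\phi$ agree and each pin down evaluation at the unique successor world; and it is precisely this structural assumption on hybrid models that the hypothesis $Mod^{\mathcal{HI}} = Mod^{\mathcal{NI}}$ delivers. Without that restriction the implication $M \models^{j} [Next]\,\sigma(\rho') \Rightarrow M \models^{j+1} \sigma(\rho')$ could fail vacuously at $Next$-dead states, so the hypothesis is genuinely used in this step and nowhere else.
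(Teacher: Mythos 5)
Your proposal is correct and follows essentially the same route as the paper: the core of the argument is the induction on sentence structure establishing $(\mathbb{N},\suc,m)\models^{j}\rho \iff M\models^{j}\sigma(\rho)$ for all $j$, with the $X$ case discharged precisely because $R_{Next}$ is the graph of the successor function in $Mod^{\mathcal{NI}}$. You are in fact slightly more complete than the paper, which leaves the naturality square and the instantiation at $j=0=R_{Init}$ implicit; the only nitpick is your closing claim that the hypothesis is used ``nowhere else''--- it is also what makes $\tau\beta$ well defined ($S=\mathbb{N}$) and pins $@_{Init}$ to world $0$, as your own earlier paragraphs already use.
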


\begin{proof}
  In appendix.
\end{proof}

\noindent
In order to include the $until$ constructor we need to add 
\emph{nominal quantification} to hybridisation, which would yield
the translation
\medskip

       \begin{tabular}{l c l}
       $\sigma (\rho U \rho')$ & = & $\exists x \> . \> \pv{After^\star} (
       \> x \wedge \sigma (\rho')) \wedge [After^\star](\pv{After}x \Rightarrow
       \sigma (\rho))$
       \end{tabular}

\medskip
\noindent
Actually, the proof that hybridisation with nominal quantification is
also an endofunctor (and the satisfaction condition for $until$
associated with $\tau$ holds) boils down to a routine
calculation. This means that the theorem above can be replicated, taking
care of the $until$ operator, in a straightforward manner.

\section{Conclusions and future work}\label{sc:con}
Asymmetric combination of logics is a promising tool for the (formal)
development of complex, heterogeneous software systems. This justifies
their study at an abstract level, paving the way to general results
on, for example, property preservation along the combination process.
Often such a study has been made on a case-by-case basis \emph{e.g.}
\cite{modal_diaconescu,renato-sbmf14,neves2016}.  This paper, on the
other hand, surveys a more general, functorial perspective using three
different asymmetric combinations of logics as case-studies.  In
particular, it provided their characterisation as endofunctors over
the category of institutions by showing how to lift comorphisms and
proving that the lifted arrows obey the functorial laws.  This made
clear that not only logics, but also their translations can be
combined.

The development of an institutional, abstract notion of asymmetric
combination of logics proposed in the paper, hints at a set of
directions for future research.  For example, we saw at the
abstract level that conservativity (an important property for safely
`borrowing' a theorem prover) and equivalence are preserved by
combination. However, a full study is still to be done in what regards
preservation of (co)limits, \emph{e.g.} to discuss whether \emph{the
  combination of the product of two logics is equivalent to the
  product of their respective
  combinations} 

Another research direction was set by J. Goguen in his Categorial
Manifest \cite{categoricalmanifesto}: \emph{``if you have found an
  interesting functor, you might be well advised to investigate its
  adjoints''}.  We studied natural transformations between
such functors and showed that they nicely complement the lifting of
comorphisms: while the latter maps the bottom level and
keeps the top one, the former maps the top and keeps the bottom. We gave an
example of a natural transformation between temporalisation and
hybridisation, but others deserve to be studied as well.  For example,
in document \cite{hybridisation} it is shown how, given a
comorphism from an institution $\mathcal{I}$ to $FOL$, a
comorphism from $\mathcal{HI}$ to $FOL$ can be obtained. More
generally, the current paper shows that comorphisms can be
built by lifting the original comorphism and then composing
it with the `flat' natural transformation
$E : \mathcal{C} \rightarrow 1_{\mathcal{I}}$ (whenever it exists).
Diagrammatically,
\begin{displaymath}
\centerline{
\xymatrix{
\mathcal{I} \ar@{~)}[d]_{\mathcal{C}}  
\ar[rr]^{(\Phi,\alpha,\beta) }
    & & 
\mathcal{I}' \ar@{~)}[d]^{\mathcal{C}} 
    \\
\mathcal{C}\mathcal{I}  
\ar[rr]_{\mathcal{C} (\Phi,\alpha,\beta)}
    & &
\mathcal{C}\mathcal{I}' \ar@/_2pc/[u]_{E}
}
}
\end{displaymath}
\noindent
On a more speculative note, the perspective taken in this paper also
suggests to look at `trivial' asymmetric combinations. For example, it
is straightforward to define \emph{identisation}, in which the added
layer has a trivial structure, but also \emph{trivialisation}
$(\mathcal{T})$, which turns a logic into the trivial one
(technically, the initial object in the category $\mathbf{I}$ of
institutions). The latter case implies that there is a (unique)
natural transformation $\mathcal{T} \rightarrow \mathcal{C}$ to any
combination $\mathcal{C}$.  

From a pragmatic point of view, the incorporation of these ideas into
the \textsc{Hets} platform \cite{hets} paves the way for its effective
use in Software Engineering.  \textsc{Hets} is often
described as a ``motherboard'' of logics where different ``expansion
cards'' can be plugged in. These refer to individual logics (with their
particular analysers and proof tools) as well as to logic
translations. To make them compatible, logics are formalised as
institutions and translations as comorphisms. Therefore \textsc{Hets}
provides an interesting setting for the implementation of the theory
developed in this paper. Again, a specific case --- that of
\emph{hybridisation} --- was already  implemented in the
\textsc{Hets} platform \cite{hybridisationatwork}.

\subsection*{Acknowledgments}

This work is financed by the ERDF - European Regional Development Fund
through the Operational Programme for Competitiveness and
Internationalisation - COMPETE 2020 Programme and by National Funds
through the Portuguese funding agency, FCT - Fundação para a Ciência e
a Tecnologia within projects POCI-01-0145-FEDER-016692,
UID/MAT/04106/2013. Further support was provided by Norte Portugal
Regional Operational Programme (NORTE 2020), under the PORTUGAL 2020
Partnership Agreement through the ERFD in the context of project
NORTE-01-0145-FEDER-000037. Renato Neves was also sponsored by FCT
grant SFRH/BD/52234/2013, and Alexandre Madeira by FCT grant
SFRH/BPD/103004/2014.

\bibliographystyle{splncs03}
\bibliography{biblio}

\section*{Appendix (Proofs)}

\begin{lemma} \label{terms} 
  For a signature morphism $\varphi :
  \Sigma \rightarrow \Sigma'$, any model $M \in |
  Mod^{\mathcal{P}\mathcal{I}}(\Sigma') |$, and any term $t \in T(\Sigma)$,
  $({\redu{M}{\varphi}})_t = M_{{T}(\varphi)(t)}$
\end{lemma}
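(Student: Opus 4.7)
The plan is to proceed by structural induction on the term $t \in T(\Sigma)$, following the grammar $t \ni r \mid \int \psi \mid t + t \mid t \cdot t$. Before starting I would unfold what $\redu{M}{\varphi}$ actually is. Since for probabilisation the auxiliary signature category is trivial, the signature morphism $\varphi$ acts only on the base component, so writing $M = (S, p, m)$ with $m : S \to |Mod^{\mathcal{I}}(\Sigma')|$, the reduct is $\redu{M}{\varphi} = (S, p, Mod^{\mathcal{I}}(\varphi) \comp m)$. This identification is what makes the whole statement meaningful, and I would spell it out as a preliminary remark.

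The base case $t = r$ is immediate from the definition: both sides evaluate to $r$. The inductive cases $t = t_1 + t_2$ and $t = t_1 \comp t_2$ are purely mechanical: one unfolds the clause for the arithmetic operator on each side, applies the induction hypothesis to the subterms, and refolds. No subtlety is involved here beyond bookkeeping.

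The only genuinely interesting step, and the main obstacle if any, is the case $t = \int \psi$. Here I would compute
\begin{align*}
(\redu{M}{\varphi})_{\int \psi}
&= p\bigl( (Mod^{\mathcal{I}}(\varphi) \comp m)^{-1}[\psi] \bigr) \\
&= p\bigl( \{ s \in S : Mod^{\mathcal{I}}(\varphi)(m(s)) \models^{\mathcal{I}}_{\Sigma} \psi \} \bigr).
\end{align*}
At this point I would invoke the satisfaction condition of the base institution $\mathcal{I}$, which gives $Mod^{\mathcal{I}}(\varphi)(m(s)) \models^{\mathcal{I}}_{\Sigma} \psi$ iff $m(s) \models^{\mathcal{I}}_{\Sigma'} Sen^{\mathcal{I}}(\varphi)(\psi)$. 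Substituting and refolding yields $p\bigl( m^{-1}[Sen^{\mathcal{I}}(\varphi)(\psi)] \bigr) = M_{\int Sen^{\mathcal{I}}(\varphi)(\psi)} = M_{T(\varphi)(\int \psi)}$, as required.

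In short, the proof is a routine induction whose single non-trivial ingredient is applying the base satisfaction condition pointwise under the probability measure at the $\int \psi$ case; the substitution performed by $T(\varphi)$ on terms precisely mirrors, at the syntactic level, the reduct operation on the $m$-component of the model.
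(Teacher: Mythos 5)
Your proof is correct and follows essentially the same route as the paper's: structural induction on terms, with the $\int\psi$ case resolved by unfolding $m^{-1}[\psi]$ and applying the satisfaction condition of the base institution $\mathcal{I}$ pointwise inside the probability measure. Your preliminary remark identifying the reduct as $(S,p,Mod^{\mathcal{I}}(\varphi)\comp m)$ is a useful clarification that the paper leaves implicit.
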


\begin{proof}
By induction on the structure of terms,
 \begin{enumerate}[(a)]
 \item
 \begin{eqnarray*}
  && ({\redu{M}{\varphi}})_r
  \justl{=}{ interpretation of terms }
  r
  %
 \justl{=}{definition of ${T}(\varphi)$}
 M_{T(\varphi)(r)}
 \end{eqnarray*}
 \item
\begin{eqnarray*}
  && {(\redu{M}{\varphi})} _{\big ( \mbox{$\int $} \psi \big )}
  \justl{=}{ interpretation of terms  }
  p \> ( \> (Mod^\mathcal{I}(\varphi) \> \comp \> m)^{-1} [\psi] \>)
 \justl{=}{ definition of $m^{-1} [\psi]$}
 p \> 
 ( \> \{ s \in S : Mod^\mathcal{I}(\varphi) \comp m (s) \models \psi \} \> )
 \justl{=}{ $\mathcal{I}$ is an institution }
 p \> 
 ( \> \{ s \in S : m (s) \models Sen^\mathcal{I}(\varphi)(\psi) \} \> )
 \justl{=}{ definition of $m^{-1}[\psi]$ }
 p \> 
 (\> m^{-1}[ \> Sen^\mathcal{I}(\varphi)(\psi) \> ] \> )
 \justl{=}{ interpretation of terms }
 M_{ \> \mbox{$\int$} Sen^\mathcal{I}(\varphi)(\psi)  }
 \justl{=}{ definition of ${T}(\varphi)$ }
 M_{ {T}(\varphi)(\mbox{$\int$} \psi)}
 \end{eqnarray*}
\end{enumerate}
 All other cases are straightforward.
\end{proof}

\bigskip
\noindent
Proof of Theorem \ref{sattemp}. By induction on the structure of
sentences, namely for any $\psi \in Sen^\mathcal{I}(\Sigma)$
 
    \begin{eqnarray*}
    && ({\redu{M}{\varphi}}) \models^j \psi
    \justl{\Leftrightarrow}{definition of $\models^{\mathcal{L}\mathcal{I}}$}
    ({\redu{M}{\varphi}})_j \models \psi
       \justl{\Leftrightarrow}{ (reduct) definition
       of $Mod^{\mathcal{L}\mathcal{I}}$ }
    \redu{M_j}{\varphi} \models \psi 
    \justl{\Leftrightarrow}{ $\mathcal{I}$ is an institution }
    M_j \models Sen^{\mathcal{I}}(\varphi)(\psi)
    \justl{\Leftrightarrow}{ definition of $Sen^{\mathcal{L}\mathcal{I}}(\varphi)$,  
      definition of $\models^{\mathcal{L}\mathcal{I}}$}
    %
    %
    %
    M \models^j Sen^{\mathcal{L}\mathcal{I}}(\varphi)(\psi)
    \end{eqnarray*}
All other cases are straightforward.

\bigskip
\noindent


\bigskip

\noindent
Proof of Lemma \ref{compres}.
We start with preservation of identities.
\begin{enumerate}[(a)]
  \item \textsc{Signatures}.
  \begin{eqnarray*}
  &&  \mathcal{C} (1_{Sign^\mathcal{I}})
  \justl{=}{ definition of $\mathcal{C} \Phi$}
  1_{Sign^\mathcal{C}} \times 1_{Sign^\mathcal{I}}
  \justl{=}{ $Sign^\mathcal{C} \times Sign^\mathcal{I} = 
    Sign^{\mathcal{C}\mathcal{I}}$}
  1_{Sign^{\mathcal{C}\mathcal{I}}}
\end{eqnarray*}    

\item \textsc{Sentences}.
\begin{eqnarray*}
  && \mathcal{C} (1_{Sen^{\mathcal{I}}})_{(\Delta,\Sigma)} (\rho)
  \justl{=}{ definition of $\mathcal{C} \alpha $ }
  \rho [ \> \psi \in Sen^\mathcal{I}(\Sigma) \> / 
  \> (1_{Sen^{\mathcal{I}}})_\Sigma (\psi) \> ] 
  \justl{=}{ definition of $1_{Sen^{\mathcal{I}}}$ }
  \rho  
 \end{eqnarray*}

\item \textsc{Models}.

 \begin{eqnarray*}
  && \mathcal{C} (1_{Mod^\mathcal{I}})_{( \Delta,\Sigma)} 
  \justl{=}{ definition of $\mathcal{C} \beta$ }
  id \times id \times \big ( \> (1_{Mod^\mathcal{I}})_\Sigma \comp \big )
  \justl{=}{ $id \comp m = m$ }
  id \times id \times id
\end{eqnarray*}

\end{enumerate}

\noindent
In the case of distribution over composition, we reason
\begin{enumerate}[(a)]

\item \textsc{Signatures}. 
  $\mathcal{C} (\Phi_2 \comp \Phi_1) = 
  \mathcal{C} \Phi_2 \comp \mathcal{C} \Phi_1$
 
 \begin{eqnarray*}
  && \mathcal{C} (\Phi_2 \comp \Phi_1)
  \justl{=}{ definition of $\mathcal{C} \Phi$}
    1_{Sign^\mathcal{C}} \times (\Phi_2 \comp \Phi_1)
 \justl{=}{ identity, and definition of product}
  (1_{Sign^\mathcal{C}} \times \Phi_2) \comp (1_{Sign^\mathcal{C}} \times \Phi_1)
 \justl{=}{ definition of $\mathcal{C} \Phi$ (twice) }
 \mathcal{C} \Phi_2 \comp \mathcal{C} \Phi_1
\end{eqnarray*}    

\item \textsc{Sentences}. $\mathcal{C} \big ( (\alpha_2 \circ 1_{\Phi_1}) \comp \alpha_1 \big ) = (\mathcal{C} \alpha_2 \circ 1_{\mathcal{C} \Phi_1}) \comp \mathcal{C} \alpha_1$

\begin{eqnarray*}
  && \mathcal{C} ((\alpha_2 \circ 1_{\Phi_1}) \comp \alpha_1) \> (\rho)
  \justl{=}{ definition of $\mathcal{C} \alpha$, and composition of 
  natural transformations }
  \rho [ \psi \in Sen^\mathcal{I}(\Sigma) \> / \> 
  (\alpha_2 \circ 1_{\Phi_1}) \comp \alpha_1 \> (\psi) ]
  \justl{=}{ horizontal composition}
  \rho [ \psi \in Sen^\mathcal{I}(\Sigma) \> / \> \alpha_2 
  \comp \alpha_1 \> (\psi) ] 
  \justl{=}{ composition }
  \> \big (\rho [\> \psi \in Sen^\mathcal{I}(\Sigma) \> / \>  
  \alpha_1 \> (\psi) \> ] \> \big ) \> \>
  [ \> \psi \in Sen^{\mathcal{I}'} \comp \Phi_1 (\Sigma) \> / \> 
  \alpha_2 \> (\psi) \> ]
  %
  %
  \justl{=}{horizontal composition}
  \big ( (\mathcal{C} \alpha_2) \circ 1_{\mathcal{C} \Phi_1} \big) 
  \comp (\mathcal{C} \alpha_1) (\rho)
   \justl{=}{composition of natural transformations }
      \big ( (\mathcal{C} \alpha_2 \circ 1_{\mathcal{C} \Phi_1}) \comp \mathcal{C} \alpha_1 \big ) (\rho)
 \end{eqnarray*}

\item \textsc{Models}. $\mathcal{C} \big (\beta_1 \comp (\beta_2 \circ 1_{\Phi^{op}_1}) \big ) = \mathcal{C} \beta_1 \comp (\mathcal{C} \beta_2 \circ 1_{\mathcal{C} \Phi^{op}_1})$

\begin{eqnarray*}
  && \mathcal{C} \big (\beta_1 \comp (\beta_2 \circ 1_{\Phi^{op}_1}) \big )_{ ( \Delta,\Sigma ) }
  \justl{=}{definition of $\mathcal{C} \beta$}
  id \times id \times \big ( (\beta_1 \comp (\beta_2 \circ 1_{\Phi^{op}_1})_\Sigma \comp \big )  
  \justl{=}{identity,  and definition of product}
  \big ( id \times id \times (\beta_1)_\Sigma \comp  \big ) \comp 
  \big ( id \times id \times  (\beta_2 \circ 1_{\Phi^{op}_1})_\Sigma \comp \big )
  \justl{=}{horizontal composition}
  \big ( id \times id \times   (\beta_1)_\Sigma \comp  \big ) \comp 
  \big ( id \times id \times  (\beta_2)_{\Phi^{op}_1(\Sigma)} \comp \big )
  \justl{=}{definition of $\mathcal{C} \beta$ (twice)}
  (\mathcal{C} \beta_1)_{ ( \Delta,\Sigma )} \comp (\mathcal{C} \beta_2)_{( \Delta,\Phi^{op}_1(\Sigma) ) }
  \justl{=}{horizontal composition}
  (\mathcal{C} \beta_1)_{( \Delta,\Sigma )} \comp 
  (\mathcal{C} \beta_2 \circ 1_{\mathcal{C} \Phi^{op}_1})_{( \Delta,\Sigma )}
  \justl{=}{composition of natural transformations }
  \big ( \> \mathcal{C} \beta_1 \comp (\mathcal{C} \beta_2 \circ 1_{\mathcal{C}
    \Phi^{op}_1}) \> \big )_{( \Delta,\Sigma )}
\end{eqnarray*}    
\end{enumerate}

\bigskip

\noindent
Proof of Theorem \ref{combcom}. We start with the case of
temporalisation, which follows by induction on the structure of
sentences.
  \begin{enumerate}[(a)]
  \item $\psi \in Sen^\mathcal{I}(\Sigma)$,
  \begin{eqnarray*}
  && (\mathcal{L} \beta) (M) \models^j \psi
  \justl{\Leftrightarrow}{ definition $\models^{\mathcal{L}\mathcal{I}}$}
   (\mathcal{L} \beta) (M)_j \models \psi
  \justl{\Leftrightarrow}{ definition of $\mathcal{L} \beta$ }
  \beta (M_j) \models \psi
  \justl{\Leftrightarrow}{ $(\Phi,\alpha,\beta)$ is a
    comorphism}
  M_j \models \alpha (\psi)
  \justl{\Leftrightarrow}{ definition of $\mathcal{L} \alpha$}
  M_j \models (\mathcal{L} \alpha) (\psi)
 \justl{\Leftrightarrow}{ definition of $\models^{\mathcal{L}\mathcal{I}}$ }
 M \models^j (\mathcal{L} \alpha) (\psi)
\end{eqnarray*}    

\item $\neg \rho$,
  \begin{eqnarray*}
  && (\mathcal{L} \beta) (M) \models^j \neg \rho
  \justl{\Leftrightarrow}{ definition $\models^{\mathcal{L}\mathcal{I}}$ }
  (\mathcal{L} \beta) (M) \not \models^j  \rho
  \justl{\Leftrightarrow}{ induction hypothesis }
  M \not \models^j (\mathcal{L} \alpha) (\rho)
  \justl{\Leftrightarrow}{ definition of $\models^{\mathcal{L}\mathcal{I}}$ and
    $\mathcal{L} \alpha$ }
  M \models^j (\mathcal{L} \alpha) (\neg \rho)
\end{eqnarray*}    
\end{enumerate}
The remaining cases are analogous.  For the case of probabilisation
we need the result described in the lemma below.

\begin{lemma}
\label{terms2}
Consider a signature $\Sigma \in |Sign^{\mathcal{P}\mathcal{I}}|$, a
term $t \in \textsc{T}(\Sigma)$, and a model $M \in |
Mod^{\mathcal{P}\mathcal{I}'} \comp \mathcal{P} \Phi^{op} (\Sigma)|$.
The following property holds.
  \begin{center}
    $\big ((\mathcal{P} \beta) \> (M) \big )_t = M_{
      (\mathcal{P} \alpha) (t) }$
  \end{center}

\end{lemma}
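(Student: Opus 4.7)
The plan is to proceed by structural induction on the term $t$, with the real-constant case and the two arithmetic cases being routine, and the integral case doing the real work. First I would make one preliminary observation: the map $\mathcal{P}\alpha$, although literally defined only on sentences in Definition \ref{df:morlift}, extends to terms in exactly the same way, namely by substituting $\alpha_\Sigma(\psi)$ for every occurrence of a base sentence $\psi$. This mirrors how $\textsc{T}(\varphi)$ was defined from $Sen^\mathcal{I}(\varphi)$ in the probabilisation section. Once this is noted, the argument runs closely parallel to Lemma \ref{terms} in the appendix, with the component $\beta_\Sigma$ of the comorphism taking the role previously played by the reduct $\redu{\_}{\varphi}$.

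For $t = r$ the two sides coincide immediately, since the interpretation of a real constant is that constant in any model and $\mathcal{P}\alpha$ leaves constants untouched. The key case is $t = \int \psi$. Writing $M = (S,p,m)$, the definition of $\mathcal{P}\beta$ gives $(\mathcal{P}\beta)(M) = (S, p, \beta_\Sigma \comp m)$, so the left-hand side unfolds to
\begin{flalign*}
  p\big((\beta_\Sigma \comp m)^{-1}[\psi]\big) = p\big(\{ s \in S : \beta_\Sigma(m(s)) \models^\mathcal{I}_\Sigma \psi \}\big).
\end{flalign*}
Applying the satisfaction condition of the comorphism $(\Phi,\alpha,\beta)$ inside the set-builder rewrites this as $p(m^{-1}[\alpha_\Sigma(\psi)])$, which by the interpretation of terms is exactly $M_{\int \alpha_\Sigma(\psi)} = M_{(\mathcal{P}\alpha)(\int \psi)}$.

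The two inductive steps $t = t_1 + t_2$ and $t = t_1 \cdot t_2$ require only that the term interpretation distributes pointwise over the arithmetic operations on the reals and that $\mathcal{P}\alpha$ distributes over those same operations syntactically; both sides then reduce to the sum (respectively product) of the two induction hypotheses and agree. The only conceptually non-trivial step is the one invoking the comorphism's satisfaction condition in the integral case; everything else is bookkeeping, and this is precisely why I do not anticipate a real obstacle — the probabilisation's sentence-translation was engineered so that the term layer commutes with $\beta_\Sigma$ via $\alpha_\Sigma$ on each base atom, and structural induction then lifts this commutation to arbitrary terms.
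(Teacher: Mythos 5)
Your proof is correct and follows essentially the same route as the paper's: structural induction on terms, with the constant and arithmetic cases being routine and the $\int\psi$ case resolved by unfolding $(\mathcal{P}\beta)(M)$ to $(S,p,\beta_\Sigma\comp m)$ and applying the comorphism's satisfaction condition inside the set-builder. Your preliminary remark that $\mathcal{P}\alpha$ must be understood as extending to terms by substitution on base sentences is a useful explicit note of something the paper leaves implicit, but it does not change the argument.
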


\begin{proof}
 Follows by induction on the structure of terms.
  \begin{enumerate}[(a)]
  \item
\begin{eqnarray*}
  && { \big ( (\mathcal{P} \beta) (M) \big ) }_r
  \justl{=}{ interpretation of terms }
  %
  M_r
  \justl{=}{ definition of $\mathcal{P} \alpha$ }
  {M}_{ (\mathcal{P} \alpha) (r)}
 \end{eqnarray*}
 \item 
\begin{eqnarray*}
  && {\big ( \> (\mathcal{P} \beta) (M) \> \big )}_{ \mbox{$\int $} \psi}
  \justl{=}{ definition of $\mathcal{P} \beta$ interpretation of terms}
  p \> \big ( \> ( \beta \> \comp \> m)^{-1} [\psi] \> \big )
 \justl{=}{ definition of $m^{-1} [\psi]$}
 p \> \big 
 ( \> \{ s \in S : \beta \> \comp \> m (s) \models \psi \} \> \big )
 \justl{=}{ $(\Phi,\alpha,\beta)$ is a comorphism }
 p \> \big ( \> \{ \> s \in S : 
 m (s) \models \alpha (\psi) \> \} \> \big )
 \justl{=}{definition of $m^{-1}[\psi]$ }
 p \> \big ( \> m^{-1}[\alpha (\psi)] \> \big )
 \justl{=}{ definition of $\mathcal{P} \beta$ and interpretation of terms}
 {M}_{ \> \mbox{$\int$} \alpha (\psi) }
 \justl{=}{ definition of $\mathcal{P} \alpha$}
 {M}_{ (\mathcal{P} \alpha) (\mbox{$\int$} \psi)}
 \end{eqnarray*}
\end{enumerate}
The remaining cases are proved in a similar fashion.
\end{proof}

\noindent
The satisfaction condition for $\mathcal{P}(\Phi,\alpha,\beta)$
follows by induction on the structure of sentences. In particular,
the stricly less case is a direct consequence of the previous lemma.
Negation and implication are proved as usual.

The case of hybridisation follows again by induction on the
structure of sentences. Thus,

  \begin{enumerate}[(a)]

  \item  $i \in Nom$,
  \begin{eqnarray*}
  && \mathcal{H} \beta \> (M) \models^w i
  \justl{\Leftrightarrow}{ definition of $\models^{\mathcal{H}}$}
  \big ( \> \mathcal{H} \beta \> (M) \> \big )_i = w
  \justl{\Leftrightarrow}{ definition of $\mathcal{H} \beta$}
  M_i = w
  \justl{\Leftrightarrow}{ definition of $\models^{\mathcal{H}}$,
  and $\mathcal{H}\alpha$}
  %
  %
  M \models^w \mathcal{H} \alpha \> (i)
  \end{eqnarray*}
  
  \item $\psi \in Sen^{\mathcal{I}}(\Sigma)$,
  \begin{eqnarray*}
  && \mathcal{H} \beta \> (M) \models^w  \psi
  \justl{\Leftrightarrow}{ definition of $\models^{\mathcal{H}}$ }
  \beta \comp m (w) \models \psi
  \justl{\Leftrightarrow}{ $(\Phi,\alpha,\beta)$ is a
    comorphism  }
  m(w) \models  \alpha \> (\psi)
  \justl{\Leftrightarrow}{ definition of $\models^{\mathcal{H}}$, and
  $\mathcal{H} \alpha$}
  %
  %
  %
  M \models^w \mathcal{H} \alpha \> (\psi)
  \end{eqnarray*}

\item $@_i \rho$,
  \begin{eqnarray*}
  && \mathcal{H} \beta \> (M) \models^w @_i \rho
  \justl{\Leftrightarrow}{  definition of $\models^{\mathcal{H}}$, and
    $\big ( \> \mathcal{H} \beta \> (M) \> \big)_i = M_ i$}
  \mathcal{H} \beta \> (M) \models^{M_i} \rho
  \justl{\Leftrightarrow}{ induction hypothesis}
  M \models^{M_i} \mathcal{H} \alpha \> (\rho)
  \justl{\Leftrightarrow}{  definition of $\models^{\mathcal{H}}$ }
  M \models^w @_i \> \mathcal{H} \alpha \> (\rho)
  \justl{\Leftrightarrow}{ definition of $\mathcal{H} \alpha$}
  M \models^w \mathcal{H} \alpha \> (@_i \rho)
\end{eqnarray*}

  \item $\pv{\lambda} \rho$,
  \begin{eqnarray*}
  && \mathcal{H} \beta \> (M) \models^w \pv{\lambda} \rho
  \justl{\Leftrightarrow}{ definition of $\models^{\mathcal{H}}$, and
  $R_\lambda$ of $\mathcal{H} \beta \> (M)$ 
  is equal to $R_\lambda$ of $M$}
  \mbox{there is a } w' \mbox{ such that } (w,w') \in R_\lambda \mbox{ and
  } \mathcal{H} \beta \> (M) \models^{w'} \rho
  \justl{\Leftrightarrow}{ induction hypothesis}
   \mbox{there is a } w' \mbox{ such that } (w,w') \in R_\lambda \mbox{ and
  } M \models^{w'} \mathcal{H} \alpha \> (\rho)
  \justl{\Leftrightarrow}{ definition of $\models^{\mathcal{H}}$ }
  M \models^w \pv{\lambda} (\mathcal{H} \alpha  \> (\rho))
  \justl{\Leftrightarrow}{ definition of  $\mathcal{H} \alpha$ }
  M \models^w \mathcal{H} \alpha \>  (\pv{\lambda}  \rho)
\end{eqnarray*}    
\end{enumerate}
The remaining cases are routine  induction proofs.

\bigskip

\noindent
Proof of Theorem \ref{natLH}.
Follows by induction on the structure of sentences,
in particular
 \begin{enumerate}[(a)]
  \item $\psi \in Sen^{\mathcal{I}} (\Sigma)$,
\begin{eqnarray*}
  &&  \tau \beta \> (\mathbb{N}, R, m) \> \models^j \psi
  \justl{\Leftrightarrow}{definition of $\tau \beta$}
  (\mathbb{N}, \suc : \mathbb{N} 
  \rightarrow \mathbb{N}, m) \models^j \psi
  \justl{\Leftrightarrow}{definition of $\models^{\mathcal{L}\mathcal{I}}$}
  m(j) \models \psi
  %
  %
  %
  \justl{\Leftrightarrow}{ definition of $\models^{\mathcal{LI}}$, 
    definition of $\sigma$ }
  (\mathbb{N},R,m) \models^j \sigma (\psi)
 \end{eqnarray*}
  \item $X \rho$,
  \begin{eqnarray*}
  &&  \tau \beta \> (\mathbb{N},R,m) \models^j X \rho
  \justl{\Leftrightarrow}{definition of $\models^{\mathcal{L}\mathcal{I}}$ }
  \tau \beta \> (\mathbb{N},R,m) \models^{j+1} \rho
  \justl{\Leftrightarrow}{ induction hypothesis }
  (\mathbb{N},R,m) \models^{j+1} \sigma (\rho)
  \justl{\Leftrightarrow}{ $R_{Next}$ defines the successor function }
  (\mathbb{N},R,m) \models^j [Next] \> \sigma (\rho) 
  \justl{\Leftrightarrow}{ definition of $\sigma$ }
  (\mathbb{N},R,m) \models^j \sigma( X \rho )
 \end{eqnarray*}
\end{enumerate}
The remaining cases are proved similarly.

\end{document}